\newtheorem{theorem}{Theorem}
\newtheorem{prop}{Proposition}
\newtheorem{lemma}{Lemma}
\newtheorem{rem}{Remark}
\newtheorem{exmp}{Example}
\begin{document}

\title[Wigner's type theorem in terms of linear operators]
{Wigner's type theorem in terms of linear operators which send projections of a fixed rank to projections of other fixed rank}
\author{Mark Pankov}
\subjclass[2000]{}
\keywords{Hilbert Grassmannian, projection, self-adjoint operator of finite rank, Wigner's type theorems}
\address{Faculty of Mathematics and Computer Science, 
University of Warmia and Mazury, S{\l}oneczna 54, Olsztyn, Poland}
\email{pankov@matman.uwm.edu.pl}

\maketitle

\begin{abstract}
Let $H$ be a complex Hilbert space whose dimension is not less than $3$
and let ${\mathcal F}_{s}(H)$ be the real vector space formed by all self-adjoint operators of finite rank on $H$.
For every non-zero natural $k<\dim H$ we denote by ${\mathcal P}_{k}(H)$ the set of all rank $k$ projections.
Let $H'$ be other complex Hilbert space of dimension not less than $3$
and let $L:{\mathcal F}_{s}(H)\to {\mathcal F}_{s}(H')$ be a linear operator such that
$L({\mathcal P}_{k}(H))\subset {\mathcal P}_{m}(H')$ for some natural $k,m$ and the restriction of $L$ to ${\mathcal P}_{k}(H)$ is injective.
If $H=H'$ and $k=m$, then $L$ is induced by a linear or conjugate-linear isometry of $H$ to itself, except the case $\dim H=2k$
when there is another one possibility (we get a classical Wigner's theorem if $k=m=1$).
If $\dim H\ge 2k$, then $k\le m$.
The main result describes all linear operators $L$ satisfying the above conditions
under the assumptions that $H$ is infinite-dimensional and 
for any $P,Q\in {\mathcal P}_{k}(H)$ the dimension of the intersection of the images of $L(P)$ and $L(Q)$ is not less than $m-k$.
\end{abstract}

\section{Introduction}
By Gleason's theorem \cite{Gleason},
all (pure and mixed) states of  a quantum mechanical system can be identified with bounded self-adjoint non-negative operators of trace one
on a complex Hilbert space. 
Such operators form a convex set whose extreme points are rank one projections corresponding to pure states.  
Let $H$ be a complex Hilbert space of dimension not less than $3$.
For every non-zero natural $k<\dim H$ we denote by ${\mathcal P}_{k}(H)$
the set of all rank $k$ projections, i.e. self-adjoint idempotents whose images are $k$-dimensional subspaces.
Each ${\mathcal P}_{k}(H)$ can be identified with the class of states corresponding to operators $k^{-1}P$, where $P$ is a projection of rank $k$.

Classical Wigner's theorem says that all symmetries of the space of pure states are induced by unitary and anti-unitary operators.
There is also a non-bijective version of this result concerning transformations induced by linear and conjugate-linear isometries
and there are various extensions of Wigner's theorem on others ${\mathcal P}_{k}(H)$.
We show that a linear operator sending ${\mathcal P}_{k}(H)$ to ${\mathcal P}_{m}(H)$ 
can be obtained from a linear or conjugate-linear isometry if  the images of projections do not diverge far enough.

The non-bijective version of Wigner's theorem states that any (not necessarily bijective) transformation of ${\mathcal P}_{1}(H)$
preserving the trace of the composition of two projections (or equivalently, the angle between the corresponding rows)
is induced by a linear or conjugate-linear isometry \cite[Section 2.1]{Molnar-book}.
For $k\ge 2$ transformations of ${\mathcal P}_{k}(H)$ preserving the principal angles between the images of two projections 
and transformations preserving the trace of the composition of two projections are described in \cite[Section 2.1]{Molnar-book} and \cite{Geher},
respectively. 
All such transformations are induced by linear or conjugate-linear isometries,
except the case $\dim H=2k$ when there is an additional class generated by the transformation 
which replaces the projection on $X$ by the projection on the orthogonal complement $X^{\perp}$.
It was established in \cite{Pankov2} that
to get a transformation of ${\mathcal P}_{k}(H)$ induced by a  linear or conjugate-linear isometry it is sufficient to require that 
only some types of the principal angles (corresponding to the orthogonality and adjacency relations) are preserved.
There are also characterizations of unitary and anti-unitary operators in terms of orthogonality or compatibility preserving transformations
\cite{GeherSemrl,Gyory,Pankov1,Semrl}.

We present Wigner's type theorems of different nature.

Consider the real vector space ${\mathcal F}_{s}(H)$ formed by all finite rank self-adjoint operators on $H$.
Every such operator is a real linear combination of rank one projections (the spectral theorem)
and it is not difficult to show that each rank one projection can be presented as a real linear combination of 
rank $k$ projections for every natural $k\ge 2$. 
Therefore, the vector space ${\mathcal F}_{s}(H)$ is spanned by each ${\mathcal P}_{k}(H)$.
By \cite[Lemma 2.1.2]{Molnar-book},
if a transformation of ${\mathcal P}_{k}(H)$ preserves  the trace of the composition of two projections,
then it can be extended to an injective linear operator on ${\mathcal F}_{s}(H)$
(this fact also is exploited in \cite{Geher}).

Linear operators preserving projections of fixed finite rank were investigated in \cite{ACh,SChM,Stormer}. 
Let $L$ be a linear operator on ${\mathcal F}_{s}(H)$ such that 
$$L({\mathcal P}_{k}(H))\subset {\mathcal P}_{k}(H)$$ for certain natural $k$
and the restriction of $L$ to ${\mathcal P}_{k}(H)$ is injective.
This operator is induced by a linear or conjugate linear isometry if $\dim H\ne 2k$;
in the case when $\dim H=2k$, there is an additional class of operators satisfying the above conditions.
This statement is a small generalization of the result by Aniello and Chru\'sci\'nski \cite{ACh}.
In this paper, it will be presented as a simple consequence of \cite[Theorem 1]{Pankov2} and some arguments from \cite{Geher}.

Our main result concerns linear operators $L:{\mathcal F}_{s}(H)\to {\mathcal F}_{s}(H')$ 
(where $H'$ is other complex Hilbert space) satisfying the following conditions:
$$L({\mathcal P}_{k}(H))\subset {\mathcal P}_{m}(H')$$ 
for some natural $k,m$ and the restriction of $L$ to ${\mathcal P}_{k}(H)$ is injective.
In the case when $\dim H\ge 2k$, the existence of such operators implies that $k\le m$.
We determine all linear operators $L$ satisfying the above conditions 
under the assumptions that $H$ is infinite-dimensional and 
for any $P,Q\in {\mathcal P}_{k}(H)$ the dimension of the intersection of the images of $L(P)$ and $L(Q)$ is not less than $m-k$.
We will use a modification of the methods exploited to study isometric embeddings of Grassmann graphs \cite[Chapter 3]{Pankov-book}
and Geh\'er's characterization of the adjacency relation \cite[Lemma 2]{Geher}.

\section{Results}
Let $H$ and $H'$ be complex Hilbert spaces whose dimensions are not less than $3$. 
For every closed subspace $X$ we denote by $P_{X}$ the projection on $X$.
Consider a few examples of linear operators between ${\mathcal F}_{s}(H)$ and ${\mathcal F}_{s}(H')$
which send projections to projections.

\begin{exmp}{\rm
Let $U:H\to H'$ be a linear or conjugate-linear isometry. 
Then $U^{*}:H'\to H$ is surjective and $U^{*}U$ is identity.
For every $A\in{\mathcal F}_{s}(H)$ we define
$$L_{U}(A)=UAU^{*}$$
and get the injective linear operator $L_{U}:{\mathcal F}_{s}(H)\to {\mathcal F}_{s}(H')$.
It sends $P_X$ to $P_{U(X)}$ and 
$$L_{U}({\mathcal P}_{k}(H))\subset {\mathcal P}_{k}(H)$$
for every $k$.
This operator is invertible only in the case when $U$ is a unitary or anti-unitary operator.
If $\dim H=\dim H'$ is finite, then every linear or conjugate-linear isometry of $H$ to $H'$ is a unitary or anti-unitary operator.
}\end{exmp}

\begin{exmp}{\rm
Suppose that $\dim H=n$ is finite and fix non-zero natural $k<n$.
Consider the linear operator $L^{\perp}_{k}$ on ${\mathcal F}_{s}(H)$
defined as 
$$L^{\perp}_{k}(A)=k^{-1}{\rm tr }(A){\rm Id}_{H}-A$$
for every $A\in{\mathcal F}_{s}(H)$ (${\rm tr }(A)$ is the trace of $A$).
If $X$ is a $k$-dimensional subspace of $H$, then $L^{\perp}_{k}$ sends $P_{X}$ to $P_{X^{\perp}}$
and we have
$$L^{\perp}_{k}({\mathcal P}_{k}(H))={\mathcal P}_{n-k}(H),$$
i.e. $L^{\perp}_{k}$ preserves ${\mathcal P}_{k}(H)$ if $n=2k$.
The operator $L^{\perp}_{k}$ is invertible.
}\end{exmp}

\begin{exmp}{\rm
Let $k$ and $m$ be natural numbers such that $k<\dim H$, $m<\dim H'$ and $k\le m$. 
Consider an $(m-k)$-dimensional subspace $W\subset H'$ and
denote by $H''$ the orthogonal complement of $W$.
Suppose that $U:H\to H''$ is a  linear or conjugate-linear isometry. 
For every $A\in {\mathcal F}_{s}(H)$ we denote by $L_{U,W}(A)$ the element of ${\mathcal F}_{s}(H')$
whose restriction to $H''$ coincides with $L_{U}(A)$ and whose restriction to $W$ is
$k^{-1}{\rm tr}(A){\rm Id}_{W}$;
in other words,
$$L_{U,W}(A)=L_{U}(A)P_{H'}+k^{-1}{\rm tr}(A)P_{W}.$$
We get the injective linear operator $L_{U,W}:{\mathcal F}_{s}(H)\to {\mathcal F}_{s}(H')$
(which coincides with $L_{U}$ if $W=0$) and 
$$L_{U,W}({\mathcal P}_{k}(H))\subset {\mathcal P}_{m}(H').$$
The image of the projection on a $k$-dimensional subspace $X$ is the projection on 
the $m$-dimensional subspace $U(X)+W$.
}\end{exmp}

Our first statement is a generalization of  Aniello--Chru\'sci\'n\-ski's result \cite{ACh}.

\begin{theorem}\label{theorem1}
Let $L$ be a linear operator on ${\mathcal F}_{s}(H)$.
Suppose that there is natural $k$ such that 
$$L({\mathcal P}_{k}(H))\subset {\mathcal P}_{k}(H)$$
and the restriction of $L$ to ${\mathcal P}_{k}(H)$ is injective.
Then  one of the following possibilities is realized:
\begin{enumerate}
\item[$\bullet$] $L=L_{U}$ for a certain linear or conjugate-linear isometry $U$,
\item[$\bullet$] $\dim H=2k$ and $L=L^{\perp}_{k}L_{U}$, where $U$ is a unitary or anti-unitary operator.
\end{enumerate}
\end{theorem}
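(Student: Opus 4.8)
The plan is to pass from the linear operator $L$ to the transformation $f$ of ${\mathcal P}_{k}(H)$ that it induces (equivalently, a transformation of the Grassmannian of $k$-dimensional subspaces, $P_{X}\mapsto f(P_{X})=P_{X'}$), to show that $f$ preserves the two relations — adjacency and orthogonality — required by \cite[Theorem 1]{Pankov2}, and finally to recover $L$ on all of ${\mathcal F}_{s}(H)$ by linearity. First I would record two elementary consequences of the hypotheses. Since ${\mathcal P}_{k}(H)$ spans ${\mathcal F}_{s}(H)$ and ${\rm tr}\,L(P)={\rm tr}\,P=k$ for every $P\in{\mathcal P}_{k}(H)$, the linear functionals ${\rm tr}\circ L$ and ${\rm tr}$ agree on a spanning set and hence $L$ preserves the trace. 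Writing $\langle A,B\rangle={\rm tr}(AB)$ for the natural real inner product on ${\mathcal F}_{s}(H)$, we get $\|L(P)\|^{2}={\rm tr}(L(P)^{2})={\rm tr}\,L(P)=k=\|P\|^{2}$, so $L$ sends every rank $k$ projection to a vector of the same Hilbert--Schmidt norm; together with injectivity on ${\mathcal P}_{k}(H)$, this is the only metric information the argument will use.

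The heart of the matter is the preservation of adjacency, and here I would use the affine-geometric description of Grassmann lines. Fix a $(k-1)$-dimensional subspace $S$ and a $2$-dimensional subspace $D\perp S$; the $k$-subspaces $S\oplus\langle z\rangle$ with $\langle z\rangle\subset D$ form a line of the Grassmann space, and one checks directly that the corresponding projections are exactly $\mathcal A_{S,D}\cap{\mathcal P}_{k}(H)$, where $\mathcal A_{S,D}=P_{S}+\{C:\ C\ \text{self-adjoint on}\ D,\ {\rm tr}\,C=1\}$ is a $3$-dimensional real affine subspace; this intersection is a round $2$-sphere (the Bloch sphere of $D$ translated by $P_{S}$). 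Because $L$ is linear and injective on this sphere, it cannot collapse the $3$-dimensional affine span (otherwise the $2$-sphere would embed injectively into a plane), so the image is a genuine ellipsoid contained in ${\mathcal P}_{k}(H)$. As every element of ${\mathcal P}_{k}(H)$ lies on the Hilbert--Schmidt sphere of radius $\sqrt{k}$ about $0$, this ellipsoid is contained in the round $2$-sphere cut out on the image affine span by that sphere; an ellipsoid lying on a round $2$-sphere of the same dimension must coincide with it and be round. Thus $L$ carries each Grassmann-line sphere onto a round $2$-sphere of rank $k$ projections. Geh\'er's characterization of adjacency \cite[Lemma 2]{Geher} identifies the round $2$-spheres inside ${\mathcal P}_{k}(H)$ as precisely the spheres $\mathcal A_{S',D'}\cap{\mathcal P}_{k}(H)$; hence each image sphere is again a Grassmann-line sphere, necessarily of the same radius and centre-norm, and $L$ restricts to an isometry of each line's affine span. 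In particular $f$ maps lines to lines, so it preserves adjacency and the principal angle between adjacent subspaces.

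From this point the preservation of orthogonality, the remaining hypothesis of \cite[Theorem 1]{Pankov2}, is extracted from the within-line metric recovered above together with the methods for isometric embeddings of Grassmann graphs \cite[Chapter 3]{Pankov-book}. Once $f$ is known to preserve both orthogonality and adjacency, \cite[Theorem 1]{Pankov2} shows that $f$ is induced by a linear or conjugate-linear isometry $U$ of $H$, except when $\dim H=2k$, where $f$ may instead be the composition of such a map with the orthocomplementation $X\mapsto X^{\perp}$. In the first case $L(P_{X})=P_{U(X)}=L_{U}(P_{X})$ for every $X$, and in the exceptional case $L(P_{X})=P_{U(X)^{\perp}}=L^{\perp}_{k}L_{U}(P_{X})$; since ${\mathcal P}_{k}(H)$ spans ${\mathcal F}_{s}(H)$ and both sides are linear, these identities propagate to all of ${\mathcal F}_{s}(H)$, giving $L=L_{U}$ or $L=L^{\perp}_{k}L_{U}$ as claimed.

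The step I expect to be the main obstacle is establishing the full set of preserved relations from pure linearity, and it splits into two difficulties. The first is the classification of the round $2$-spheres contained in ${\mathcal P}_{k}(H)$ — equivalently, ruling out any ``exotic'' two-parameter family of rank $k$ projections lying in a $3$-dimensional affine subspace other than the Grassmann-line spheres $\mathcal A_{S,D}\cap{\mathcal P}_{k}(H)$ — which is exactly what I would draw from \cite[Lemma 2]{Geher} and what forces the image spheres to have the correct radius. The second, and more delicate, is promoting the resulting isometry on each line to preservation of the orthogonality relation, since a chain of successively orthogonal adjacencies need not have orthogonal endpoints; this is the genuinely global point, and it is where the Grassmann-embedding techniques are needed. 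Minor additional care makes the sphere/ellipsoid reasoning rigorous and handles a possibly infinite-dimensional $H$, although every configuration used lives in the finite-dimensional subspace $S\oplus D$ and so reduces to a low-dimensional computation.
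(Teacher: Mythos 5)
Your overall scaffolding --- pass to the induced map $f$ on ${\mathcal G}_{k}(H)$, verify the hypotheses of \cite[Theorem 1]{Pankov2}, then extend the conclusion from ${\mathcal P}_{k}(H)$ to ${\mathcal F}_{s}(H)$ by linearity --- is the same as the paper's, but the middle of your argument has a genuine gap, in fact two. The adjacency step rests on the claim that the round $2$-spheres contained in ${\mathcal P}_{k}(H)$ are exactly the Grassmann-line spheres ${\mathcal A}_{S,D}\cap{\mathcal P}_{k}(H)$, which you attribute to \cite[Lemma 2]{Geher}. Geh\'er's lemma says something different (it is Lemma~\ref{lemma0-4} here: ${\mathcal X}_{k}(X,Y)$ is a $1$-dimensional real manifold if and only if $X,Y$ are non-compatible and adjacent), and the classification you need is \emph{false} whenever $k\ge 2$. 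Indeed, fix orthogonal $2$-dimensional subspaces $D_{1},D_{2}$, a $(k-2)$-dimensional subspace $S_{0}$ orthogonal to both, and a unitary $u:D_{1}\to D_{2}$, and for $\ell\in{\mathcal G}_{1}(D_{1})$ set $X_{\ell}=S_{0}+\ell+u(\ell)$. The map $P_{\ell}\mapsto P_{X_{\ell}}=P_{S_{0}}+P_{\ell}+uP_{\ell}u^{*}$ is affine and multiplies Hilbert--Schmidt distances by $\sqrt{2}$, so $\{P_{X_{\ell}}:\ell\in{\mathcal G}_{1}(D_{1})\}$ is a round $2$-sphere (of radius $1$) lying in a $3$-dimensional affine subspace of ${\mathcal F}_{s}(H)$ and consisting of rank $k$ projections; yet any two distinct $X_{\ell},X_{\ell'}$ meet exactly in $S_{0}$, so they are \emph{not} adjacent. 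Consequently, knowing that $L$ maps each line sphere onto a round $2$-sphere of projections (that part of your argument is fine) does not tell you the image is a line sphere: it could a priori be such a ``diagonal'' sphere. Your parenthetical ``necessarily of the same radius and centre-norm'' is exactly the unproved point: line spheres have radius $1/\sqrt{2}$, diagonal spheres have radius $1$, and nothing you establish controls the distance from the image affine span to the origin.

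The second problem is that orthogonality preservation, the other hypothesis of \cite[Theorem 1]{Pankov2}, is never actually proved: deferring it to ``the methods of \cite[Chapter 3]{Pankov-book}'' is a pointer, not an argument, and this is precisely where the paper does its real work. The paper's reasoning runs in the opposite order and bypasses spheres entirely: for \emph{orthogonal} $X,Y\in{\mathcal G}_{k}(H)$ one has ${\mathcal X}_{k}(X,Y)={\mathcal G}_{k}(X+Y)$, so $L$ maps ${\mathcal F}_{s}(X+Y)$ into ${\mathcal F}_{s}(f(X)+f(Y))$; by compactness and invariance of domain, $f$ restricts to a homeomorphism of the $2k^{2}$-dimensional manifold ${\mathcal G}_{k}(X+Y)$ onto an open and closed subset of $[f(X)\cap f(Y),f(X)+f(Y)]_{k}$, which forces $f(X)\cap f(Y)=0$ and $f({\mathcal G}_{k}(X+Y))={\mathcal X}_{k}(f(X),f(Y))={\mathcal G}_{k}(f(X)+f(Y))$, i.e. $f(X)\perp f(Y)$ (Lemma~\ref{lemma1-1}, part (A)). Since $L$ is then a linear isomorphism of ${\mathcal F}_{s}(X+Y)$ onto the span of the image, it carries ${\mathcal X}_{k}(X',Y')$ \emph{onto} ${\mathcal X}_{m}(f(X'),f(Y'))$ for all $X',Y'\subset X+Y$, and only now does Geh\'er's lemma enter: being a $1$-dimensional manifold is preserved under this homeomorphism, which yields adjacency preservation in both directions (part (B)). Finally, your proof silently assumes $\dim H\ge 2k$ (otherwise there are no orthogonal pairs and \cite[Theorem 1]{Pankov2} is not applicable as stated); the paper treats $\dim H<2k$ separately, reducing it to the previous case by composing $L$ with the operators $L^{\perp}_{k}$ and $L^{\perp}_{n-k}$.
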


The general case can be reduced to the case when $\dim H\ge 2k$. 
If $\dim H=n$ is finite and $k>n-k$, then  we consider the operator $L^{\perp}_{k}LL^{\perp}_{n-k}$ 
which sends projections on $(n-k)$-dimensional subspaces to projections on $(n-k)$-dimensional subspaces
and its restriction to ${\mathcal P}_{n-k}(H)$ is injective.
Suppose that 
$$L^{\perp}_{k}LL^{\perp}_{n-k}=L_{U}$$
for a certain unitary or anti-unitary operator $U$. 
We have $U(X^{\perp})=U(X)^{\perp}$ for every closed subspace $X\subset H$
which guarantees that $L$ maps the projection on a $k$-dimensional subspace $X$ to the projection on $U(X)$.
So, $L(P)=L_{U}(P)$ for all $P\in {\mathcal P}_{k}(H)$ which means that 
$L=L_{U}$, since ${\mathcal F}_{s}(H)$ is spanned by ${\mathcal P}_{k}(H)$.

In the case when $\dim H\ge 2k$, this statement is a consequence of the results from the next section.

\begin{exmp}{\rm
Let $P$ be a projection of rank $k$. 
Consider the  linear operator which sends every $A\in {\mathcal F}_{s}(H)$ to $k^{-1}{\rm tr}(A)P$. 
It transfers every projection of rank $k$ to $P$,
i.e. the assumption that the restriction of $L$ to ${\mathcal P}_{k}(H)$ is injective cannot be omitted.
}\end{exmp}

Our main result is the following.

\begin{theorem}\label{theorem2}
Suppose that $L:{\mathcal F}_{s}(H)\to {\mathcal F}_{s}(H')$ is a linear operator satisfying the following conditions:
\begin{enumerate}
\item[(L1)] $L({\mathcal P}_{k}(H))\subset {\mathcal P}_{m}(H')$ for some natural $k$ and $m$,
\item[(L2)]  the restriction of $L$ to ${\mathcal P}_{k}(H)$ is injective.
\end{enumerate}
If $\dim H\ge 2k$, then $k\le m$. 
Suppose that $H$ is infinite-dimensional and the following condition holds:
\begin{enumerate}
\item[(L3)] for any $P,Q\in {\mathcal P}_{k}(H)$
the dimension of the intersection of the images of $L(P)$ and $L(Q)$ is not less than $m-k$.
\end{enumerate}
Then $L=L_{U,W}$, where $W$ is an $(m-k)$-dimensional subspace of $H'$ and 
$U$ is a linear or conjugate-linear isometry of $H$ to the orthogonal complement of $W$.
\end{theorem}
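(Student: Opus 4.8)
The plan is to study the map on subspaces induced by $L$, namely $f(X):={\rm Im}\,L(P_X)$, which takes values in the $m$-dimensional subspaces of $H'$ by (L1); since a projection is determined by its image, (L2) makes $f$ injective. A preliminary observation is the trace relation ${\rm tr}(L(A))=\tfrac{m}{k}{\rm tr}(A)$ for every $A$: it holds on ${\mathcal P}_{k}(H)$ because there ${\rm tr}\equiv k$ while ${\rm tr}\circ L\equiv m$, and then on all of ${\mathcal F}_{s}(H)$ because ${\mathcal P}_{k}(H)$ spans it. Everything reduces to identifying $f$, for once $f(X)=U(X)\oplus W$ is known we get $L(P_X)=P_{f(X)}=L_{U,W}(P_X)$ on the spanning set ${\mathcal P}_{k}(H)$, whence $L=L_{U,W}$ by linearity.

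For the inequality $k\le m$ and for a basic rigidity statement I would first analyse an orthogonal pair $X\perp Y$ of $k$-subspaces (available since $\dim H\ge 2k$). Put $M:=L(P_X)+L(P_Y)=L(P_{X+Y})$. For every $k$-subspace $Z\subseteq X+Y$ with orthocomplement $Z'$ inside $X+Y$ one has $L(P_Z)+L(P_{Z'})=M$, hence $0\le L(P_Z)\le M$, so ${\rm Im}\,L(P_Z)$ lies between $\ker M$ and the $2$-eigenspace $W_2$ of $M$; a short computation shows $W_2\subseteq f(Z)$ for all such $Z$. Splitting off $W_2$ turns $Z\mapsto L(P_Z)$ into a continuous injection ${\mathcal G}_{k}({\mathbb C}^{2k})\to{\mathcal G}_{p}({\mathbb C}^{2p})$ of Grassmannians of $k$- resp. $p$-subspaces, with $p=m-\dim W_2$, and invariance of domain forces $k\le p\le m$. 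Feeding in (L3), which gives $\dim(f(X)\cap f(Y))=\dim W_2\ge m-k$, i.e. $p\le k$, I obtain $p=k$ and the sharp statement $\dim(f(X)\cap f(Y))=m-k$ for every orthogonal pair.

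The heart of the argument, and the step I expect to be the main obstacle, is to show that $f$ preserves adjacency, equivalently that $f$ sends each line (pencil of $k$-subspaces between a $(k-1)$-space and a $(k+1)$-space) to a line. Along such a pencil $L$ produces an affine Bloch-sphere family $\vec n\mapsto B+\tfrac12\,\vec n\cdot\vec S$ of rank-$m$ projections, $\vec n\in S^{2}$; peeling off the common $2$-eigenspace as above reduces it to an affine family $S^{2}\to{\mathcal G}_{p'}({\mathbb C}^{2p'})$. Such families need not be thin: maps of the shape $\Pi(\vec n)\otimes{\rm Id}$ realise $p'>1$, and (L3) read only along the pencil yields merely $p'\le k$. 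To force $p'=1$ I would combine the orthogonal-pair rigidity with Geh\'er's characterization of the adjacency relation \cite[Lemma 2]{Geher} and with a \emph{global} use of (L3): intersecting the images $f(X_t)$ of the pencil with the image $f(Y_0)$ of a fixed $k$-subspace orthogonal to the whole pencil constrains the moving $(m-k)$-spaces $f(X_t)\cap f(Y_0)$ inside the fixed $m$-space $f(Y_0)$, and together with the exact intersection dimensions already obtained this should exclude the fat Bloch spheres. Ruling out this local degeneracy by means of the global linear structure is the crux.

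Once adjacency is preserved, injectivity upgrades $f$ to an isometric embedding of the Grassmann graph on $k$-subspaces of $H$ into the one on $m$-subspaces of $H'$ (adjacent pairs go to adjacent pairs, and the distance $d(X,Y)=k-\dim(X\cap Y)$ is matched). I would then invoke the classification of isometric embeddings of Grassmann graphs \cite[Chapter 3]{Pankov-book}: every such embedding has the form $X\mapsto U(X)\oplus W$ for a fixed $(m-k)$-dimensional $W\subseteq H'$ and a semilinear isometry $U$ of $H$ onto a complement of $W$. Boundedness of $U$ forces the underlying automorphism of ${\mathbb C}$ to be the identity or the conjugation, so $U$ is a linear or conjugate-linear isometry of $H$ onto $W^{\perp}$. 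Finally $f(X)=U(X)\oplus W$ gives $L(P_X)=P_{f(X)}=L_{U,W}(P_X)$ on ${\mathcal P}_{k}(H)$, and since ${\mathcal P}_{k}(H)$ spans ${\mathcal F}_{s}(H)$ we conclude $L=L_{U,W}$.
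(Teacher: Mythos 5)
Your opening moves are sound and essentially coincide with the paper's: the orthogonal-pair analysis, invariance of domain giving $k\le m$, and (L3) forcing $\dim(f(X)\cap f(Y))=m-k$ for orthogonal $X,Y$ all match Lemma \ref{lemma1-1}(A). But the step you yourself flag as the crux --- adjacency preservation --- is genuinely open in your write-up, and the Bloch-sphere sketch does not close it. The idea you are missing is \emph{surjectivity}: once $\dim(f(X)\cap f(Y))=m-k$, the interval $[f(X)\cap f(Y),f(X)+f(Y)]_{m}$ is itself homeomorphic to ${\mathcal G}_{k}({\mathbb C}^{2k})$; by invariance of domain $f({\mathcal G}_{k}(X+Y))$ is open in it, by compactness it is closed, and by connectedness it is \emph{all} of it. This has two consequences you never obtain. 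First, ${\mathcal X}_{m}(f(X),f(Y))$ equals the whole interval, which is exactly compatibility of $f(X)$ and $f(Y)$ --- a fact your proposal never establishes and which is needed downstream. Second, $L$ restricted to ${\mathcal F}_{s}(X+Y)$ becomes a surjective linear map between spaces of equal finite dimension, hence an isomorphism, so for \emph{every} pair $X',Y'\subset X+Y$ one gets the equality $f({\mathcal X}_{k}(X',Y'))={\mathcal X}_{m}(f(X'),f(Y'))$, not merely the inclusion. Only with this equality does Geh\'er's Lemma \ref{lemma0-4} (the ${\mathcal X}$-set is a $1$-dimensional manifold iff the pair is non-compatible and adjacent) transfer adjacency in both directions; with the inclusion alone the lemma gives nothing, since a $1$-manifold injects happily into a larger ${\mathcal X}$-set. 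A closure argument then upgrades ``non-compatible adjacent'' to ``adjacent''. Your alternative of ruling out ``fat Bloch spheres'' by a global use of (L3) is not carried out, and it is not clear it can be, precisely because (L3) along a pencil only yields $p'\le k$, as you note.

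The endgame has a second gap even if adjacency preservation is granted. An injective map that preserves adjacency in both directions need not be an isometric embedding: paths only give $d_{m}(f(X),f(Y))\le d_{k}(X,Y)$, and two-sided adjacency preservation rules out collapse to distance $1$ but not, say, from distance $3$ to $2$; you would need $\dim(f(X)\cap f(Y))=m-k+\dim(X\cap Y)$ for all pairs, which you have not proved. Moreover, the classification you invoke does not state that every isometric embedding is of the form $X\mapsto U(X)\oplus W$: already for $k=1$ the graph $\Gamma_{1}(H)$ is complete, so any injection into a single top is an isometric embedding not of that form, and in general the results of \cite[Chapter 3]{Pankov-book} concern finite-dimensional spaces and admit further types. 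What excludes top-type images here is the compatibility structure combined with the infinite-dimensionality of $H$: maximal compatible subsets of tops are finite ($k+1$ elements), while orthogonal families in $H$ produce infinite compatible families in the image (Lemma \ref{lemma0-2}). Tellingly, your proposal never uses the hypothesis that $H$ is infinite-dimensional, which the theorem requires. The paper instead proves ortho-adjacency preservation via geodesics (Lemmas \ref{lemma0-3}, \ref{lemma1-2}), deduces that stars go into stars, builds the recursive sequence $f_{k-1},\dots,f_{1}$ on lower Grassmannians, proves $f_{1}$ injective, and finishes by applying the Fundamental Theorem of Projective Geometry (Theorem \ref{FTPG}) and Lemma \ref{lemma0-1} to $g(X)=f_{1}(X)\cap W^{\perp}$; some replacement for this entire mechanism is needed in your approach.
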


\section{Preliminary}
\subsection{Semilinear maps}
Let $V$ and $V'$ be left vector spaces over division rings $R$ and $R'$, respectively. 
The dimensions of the vector spaces  are assumed to be not less than $3$.
Denote by ${\mathcal G}_{k}(V)$ the Grassmannian formed by $k$-dimensional subspaces of $V$.
Recall that a {\it line} of ${\mathcal G}_{1}(V)$ is the set of all $1$-dimensional subspaces contained in a certain $2$-dimensional subspace of $V$.
A map $L:V\to V'$ is called {\it semilinear} if 
$$L(x+y)=L(x)+L(y)$$
for all $x,y\in V$ and there is a ring homomorphism $\sigma:R\to R'$ such that 
$$L(ax)=\sigma(a)L(x)$$
for all $x\in V$ and $a\in R$. 
Every semilinear injection $L:V\to V'$ induces a map of ${\mathcal G}_{1}(V)$ to ${\mathcal G}_{1}(V')$
which sends lines to subsets of lines.
This map is not necessarily injective 
(if the associated ring homomorphism is not surjective, 
then $L$ can transfer distinct $1$-dimensional subspaces of $V$ to subsets of the same $1$-dimensional subspace of $V'$).
We will need the following version of the Fundamental Theorem of Projective Geometry.

\begin{theorem}[Faure and Fr\"olicher \cite{FF}, Havlicek \cite{Havlicek}]\label{FTPG}
Let $f:{\mathcal G}_{1}(V)\to {\mathcal G}_{1}(V')$ be an injection satisfying the following conditions:
\begin{enumerate}
\item[(1)] $f$ sends lines to subsets of lines,
\item[(2)] the image $f({\mathcal G}_{1}(V))$ is not contained in a line.
\end{enumerate}
Then $f$ is induced by a semilinear injection of $V$ to $V'$.
\end{theorem}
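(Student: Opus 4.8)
The plan is to reduce the statement to a single projective plane, to reconstruct the associated division-ring homomorphism there by synthetic coordinatization, and then to propagate the resulting semilinear map over all of $V$.

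\emph{Reduction to a plane.} First I would record that $f$ preserves collinearity: three points on a common line have images on a common line, and, $f$ being injective, its restriction to any line is an injection into a line. By condition~(2) there are three points $a,b,c\in{\mathcal G}_{1}(V)$ with non-collinear images; by the contrapositive of collinearity-preservation $a,b,c$ are themselves non-collinear and span a $3$-dimensional subspace $P\subset V$. Next I would check that $f$ sends the plane ${\mathcal G}_{1}(P)$ into a plane: any point of ${\mathcal G}_{1}(P)$ other than $a$ lies on a line through $a$ meeting the opposite side $bc$ in a point $y$, and two applications of line-preservation (first to the line $bc$, then to the line $ay$) confine its image to the span of $f(a),f(b),f(c)$. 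Thus $f$ restricts to an injective, line-preserving, non-degenerate map of projective planes ${\mathcal G}_{1}(P)\to {\mathcal G}_{1}(P')$, where $P'$ is the span of $f(a),f(b),f(c)$.

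\emph{The base case.} The heart of the proof is the planar case, treated by the classical von Staudt coordinatization. Fixing a projective frame in ${\mathcal G}_{1}(P)$ --- a triangle $\langle e_{0}\rangle,\langle e_{1}\rangle,\langle e_{2}\rangle$ together with the unit point $\langle e_{0}+e_{1}+e_{2}\rangle$ --- one identifies the points of the side $\langle e_{1},e_{2}\rangle$ with $R$ and realizes the ring operations of $R$ by incidence configurations (perspectivities through the off-line vertex $\langle e_{0}\rangle$ and through auxiliary points), all of which lie inside the plane; this is exactly where the hypothesis $\dim V\ge 3$ enters. Since $f$ preserves each incidence in these configurations and is injective on every line, the induced map between the coordinatizing sides is additive and multiplicative, i.e. a ring homomorphism $\sigma:R\to R'$, injective because $f$ is. Choosing representatives for the images of the frame points then produces a $\sigma$-semilinear injection $P\to P'$ inducing $f$ on ${\mathcal G}_{1}(P)$. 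Here $\sigma$ need not be surjective, which is precisely why the conclusion is a semilinear injection and not a collineation.

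\emph{Globalization.} Finally I would extend the map from the plane $P$ to all of $V$. Choosing a basis of $V$ extending a basis of $P$, I would define a candidate $\sigma$-semilinear map $L$ by sending each new basis vector to a representative of the $f$-image of the corresponding point, with scalings pinned down by suitable unit points. Two things remain: that the \emph{same} homomorphism $\sigma$ governs every line of ${\mathcal G}_{1}(V)$, which follows from the connectedness of the projective space and from Desargues-type configurations linking any two planes sharing a line; and that the projectivization of $L$ agrees with $f$ everywhere, which is checked point by point, the coordinates of an arbitrary point being determined by incidences with the frame that $f$ preserves and $L$ reproduces. Since $f$ is defined on all of ${\mathcal G}_{1}(V)$, the map $L$ has trivial kernel and is therefore a semilinear injection.

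\emph{Main obstacle.} The crux is the base case: one must exhibit the addition and multiplication configurations with all points distinct and all required lines genuinely meeting or concurrent, and then verify that $f$ carries each to a \emph{non-degenerate} configuration of the same type. The delicacy is that $f$ is not assumed surjective, so every step must be phrased in terms of preserved incidences among the finitely many points of a configuration, never appealing to an inverse of $f$. As each configuration, and each coherence argument, involves only finitely many points inside a fixed finite-dimensional subspace, the reasoning is the same whether $V$ is finite- or infinite-dimensional.
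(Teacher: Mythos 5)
The paper offers no proof of this theorem: it is imported verbatim from Faure--Fr\"olicher \cite{FF} and Havlicek \cite{Havlicek}, so your attempt can only be measured against those proofs and the truth of the statement itself. Your reduction to a plane and your base case are essentially sound: in the distinguished plane $P$ one can indeed show that two distinct lines have distinct image lines (otherwise, projecting any further point onto the two lines shows the whole image of ${\mathcal G}_{1}(P)$ lies in one line, contradicting the choice of $a,b,c$), hence intersection points go to intersection points and the von Staudt coordinatization yields an injective, not necessarily surjective, homomorphism $\sigma$ together with a $\sigma$-semilinear map inducing $f$ on ${\mathcal G}_{1}(P)$.

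The genuine gap is in your globalization. You propose to propagate $\sigma$ ``from the connectedness of the projective space and from Desargues-type configurations linking any two planes sharing a line,'' and in your final paragraph you assert that one can verify that $f$ carries each non-degenerate configuration to a non-degenerate configuration of the same type. Precisely because $\sigma$ need not be surjective, this is false: maps satisfying all the hypotheses can collapse entire planes into lines. Take $\sigma:{\mathbb Q}\hookrightarrow{\mathbb R}$ and the $\sigma$-semilinear map $L$ with $L(e_{1})=u_{1}$, $L(e_{2})=u_{2}$, $L(e_{3})=\alpha u_{1}+\beta u_{2}$, where $\alpha,\beta$ are algebraically independent reals. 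Since $1,\alpha,\beta,\alpha\beta$ are linearly independent over ${\mathbb Q}$, the proportionality condition $(a+c\alpha)(b'+c'\beta)=(a'+c'\alpha)(b+c\beta)$ forces all $2\times 2$ minors of the rational coordinate matrix to vanish, so the induced map is \emph{injective} on the projective plane although its image lies on a single projective line; adding a fourth basis vector $e_{4}\mapsto u_{3}$ produces an $f$ satisfying (1), (2) and injectivity globally while one of its planes degenerates. Inside such a plane a triangle maps to three collinear points, so no von Staudt or Desargues configuration survives there, the pinning of scalings for basis vectors spanning such a plane with a line of $P$ fails, and your chain argument that ``the same $\sigma$ governs every line'' is not established. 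This is why the actual proofs (Faure--Fr\"olicher, Havlicek, and Faure's later elementary proof) do not coordinatize plane by plane: they fix one point $a$ with representatives $u\in a$ and $u'\in f(a)$, define $L(x)$ for $\langle x\rangle\ne a$ as the unique representative of $f(\langle x\rangle)$ with $u'+L(x)$ representing $f(\langle u+x\rangle)$ (well defined because $f$ is injective on the line joining $a$ and $\langle x\rangle$), and then verify additivity and semilinearity directly at the vector level, where the degenerate planar cases can be handled one by one. To repair your argument you would need either this vector-level construction or a separate lemma controlling collapsed planes, which your sketch does not contain.
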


\begin{rem}{\rm
For non-injective $f:{\mathcal G}_{1}(V)\to {\mathcal G}_{1}(V')$ satisfying (1) and (2)
the statement fails \cite[Example 2.3]{Pankov-book}.
}\end{rem}

Let $X$ be  a set and let $R\subset X\times X$ be a symmetric relation on $X$.
A transformation $f:X\to X$ is said to be $R$ {\it preserving}  if 
$$(x,y)\in R\;\Longrightarrow (f(x),f(y))\in R;$$
in the case when 
$$(x,y)\in R\;\Longleftrightarrow (f(x),f(y))\in R,$$
we say that $f$ is $R$ {\it preserving in both directions}.

\begin{lemma}\label{lemma0-1}
Every  semilinear injection of $H$ to $H'$ preserving the orthogonality relation is a linear or conjugate-linear isometry. 
\end{lemma}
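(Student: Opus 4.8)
The plan is to first pin down the ring homomorphism $\sigma:\mathbb{C}\to\mathbb{C}$ associated with the semilinear injection $L$, showing it must be the identity or complex conjugation, and only afterwards to extract the isometry. Write $f_i=L(e_i)$ for an orthonormal pair $e_1,e_2$ in $H$ (available since $\dim H\ge 3$); these are nonzero by injectivity, and orthogonality preservation gives $f_1\perp f_2$. For a scalar $a$ the vectors $e_1+ae_2$ and $-\bar a\,e_1+e_2$ are orthogonal, so applying $L$ and expanding $\langle f_1+\sigma(a)f_2,\,-\sigma(\bar a)f_1+f_2\rangle=0$ with $f_1\perp f_2$ yields
$$\sigma(a)\,\|f_2\|^2=\overline{\sigma(\bar a)}\,\|f_1\|^2 .$$
Taking $a=1$ gives $\|f_1\|=\|f_2\|$, and then for every $a$ we obtain $\sigma(a)=\overline{\sigma(\bar a)}$; in particular $\sigma$ maps $\mathbb{R}$ into $\mathbb{R}$.

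The next step invokes the rigidity of the real field: the only ring homomorphism $\mathbb{R}\to\mathbb{R}$ is the identity, since it fixes $\mathbb{Q}$, sends squares to non-negative reals and is therefore order preserving, and an order preserving map fixing $\mathbb{Q}$ is the identity by density. Hence $\sigma|_{\mathbb{R}}=\mathrm{id}$; as $\sigma(i)^2=\sigma(-1)=-1$ forces $\sigma(i)=\pm i$, the homomorphism $\sigma$ is either the identity or complex conjugation, so $L$ is linear or conjugate-linear.

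It remains to produce the isometry. Now that $L$ is (conjugate-)linear and orthogonality preserving, I would show $\|Lx\|$ is proportional to $\|x\|$. For orthogonal unit vectors $u,v$ one has $u+v\perp u-v$, whence $\langle Lu+Lv,\,Lu-Lv\rangle=0$; using $Lu\perp Lv$ this collapses to $\|Lu\|=\|Lv\|$. Since $\dim H\ge 3$, any two unit vectors admit a common orthogonal unit vector, so chaining the previous equality shows $\|L\cdot\|$ takes a constant value $c$ on the unit sphere, i.e. $\|Lx\|=c\|x\|$ for all $x$. Injectivity gives $c>0$, and $U:=c^{-1}L$ is then a linear or conjugate-linear isometry.

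The main obstacle is the determination of $\sigma$: a priori $\mathbb{C}$ carries many wild (non-continuous) field automorphisms, and the whole point is to use the orthogonality relation to force $\sigma(\mathbb{R})\subseteq\mathbb{R}$, after which the rigidity of the automorphism group of $\mathbb{R}$ eliminates all but the two admissible maps. A secondary point is normalization: orthogonality preservation determines $L$ only up to the positive scalar $c$, which is immaterial for the induced transformation of projections and is understood to be normalized to $1$.
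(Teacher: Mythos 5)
Your proof is correct, but there is nothing in the paper to compare it against step by step: the paper's entire proof of this lemma is the citation ``See, for example, \cite[Lemma 4]{Pankov2}.'' Your two-stage argument --- first using the orthogonal pair $e_1+ae_2$, $-\bar a\,e_1+e_2$ to force $\sigma(a)=\overline{\sigma(\bar a)}$, hence $\sigma(\mathbb{R})\subseteq\mathbb{R}$, and then invoking the rigidity of $\mathbb{R}$ to conclude $\sigma=\mathrm{id}$ or conjugation; second, deriving constancy of $\|L(\cdot)\|$ on the unit sphere from $u+v\perp u-v$ and a common orthogonal unit vector (this is where $\dim H\ge 3$ enters) --- is essentially the standard argument that the citation points to, so what your version buys is self-containedness. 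One point you raise deserves emphasis: what you actually prove is that $L$ is a \emph{positive scalar multiple} of a linear or conjugate-linear isometry, and that is the correct conclusion; the lemma as literally stated is false, since $2\,\mathrm{Id}_H$ is a semilinear injection preserving orthogonality that is no isometry (the cited Lemma 4 of \cite{Pankov2} indeed asserts ``scalar multiple''). The discrepancy is harmless in this paper, because the lemma is only ever applied to a semilinear map inducing a given transformation of ${\mathcal G}_{1}(H)$, and such a map is determined only up to a scalar anyway --- exactly the normalization you describe. Two micro-points if you want your write-up airtight: record that $\sigma(1)=1$ (it is idempotent, and $\sigma(1)=0$ would force $L=0$, contradicting injectivity), since this is used both at $a=1$ and in the rigidity argument on $\mathbb{R}$; and in the chaining step the auxiliary unit vector must be orthogonal to the span of the two given unit vectors, which is the precise place where $\dim H\ge 3$ is needed.
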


\begin{proof}
See, for example, \cite[Lemma 4]{Pankov2}.
\end{proof}

\subsection{Grassmann graph}
For subspaces $X,Y\subset H$ satisfying $\dim X < k <\dim Y$ and $X\subset Y$
we denote by $[X,Y]_{k}$ the set of all $k$-dimensional subspaces $Z$ such that $X\subset Z\subset Y$.
In the cases when $X=0$ and $Y=H$, we will write $\langle Y]_{k}$ and $[X\rangle_{k}$, respectively.

Suppose that $\dim H\ge 2k$ and consider the Grassmann graph $\Gamma_{k}(H)$
whose vertex set is ${\mathcal G}_{k}(H)$ and two distinct $k$-dimensional subspaces are adjacent vertices
if their intersection is $(k-1)$-dimensional. 
If $k=1$, then any two distinct vertices in this graph are adjacent. 
In the case when $k\ge 2$, there is the following description of maximal cliques of $\Gamma_{k}(H)$
(a clique in a graph is a subset of the vertex set consisting of mutually adjacent vertices):
every maximal clique is a {\it star} $[X\rangle_{k}$, $X\in {\mathcal G}_{k-1}(H)$
or a {\it top} $\langle Y]_{k}$, $Y\in {\mathcal G}_{k+1}(H)$.

The {\it distance} between two vertices in a connected graph is the smallest number $i$ such that 
there is a path consisting of $i$ edges and connecting these vertices.
A path connecting vertices $v$ and $w$ is a {\it geodesic} if the number of vertices in this path is equal to the distance between $v$ and $w$.
The graph $\Gamma_{k}(V)$ is connected and the distance between $X,Y\in{\mathcal G}_{k}(H)$ in this graph will be denoted by $d_{k}(X,Y)$. 
We have
$$d_{k}(X,Y)=k-\dim (X\cap Y)$$
for any $X,Y\in{\mathcal G}_{k}(H)$.

Two closed subspaces $X,Y\subset H$ are called {\it compatible} if there is an orthonormal basis of $H$ such that $X$ and $Y$
both are spanned by subsets of this basis. This is equivalent to the existence of closed orthogonal subspaces 
$X'\subset X$ and $Y'\subset Y$ such that $X\cap Y$ is orthogonal to both $X',Y'$ and 
$$X=X'+(X\cap Y),\;\;\;Y=Y'+(X\cap Y).$$
A subset of ${\mathcal G}_{k}(H)$ is said to be {\it compatible} if any two elements from this subset are compatible.
We say that two elements of ${\mathcal G}_{k}(H)$ are {\it ortho-adjacent} if they are compatible and adjacent.
Compatible subsets of stars and tops consist of mutually ortho-adjacent elements.

\begin{lemma}\label{lemma0-2}
Every maximal compatible subset of a top ${\mathcal T}\subset {\mathcal G}_{k}(H)$ consists of $k+1$ elements.
Every maximal compatible subset of a star ${\mathcal S}\subset {\mathcal G}_{k}(H)$ consists of $\dim H -k +1$ elements if 
$H$ is finite-dimensional, and it is infinite if $H$ is infinite-dimensional. 
\end{lemma}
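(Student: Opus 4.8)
The plan is to translate compatibility inside a single top or star into the orthogonality of an associated family of lines, after which both cardinality claims reduce to the standard fact that a maximal family of mutually orthogonal nonzero vectors in a Hilbert space of dimension $d$ has exactly $d$ elements when $d$ is finite and is infinite otherwise. The opening observation (already recorded before the statement) is that any two distinct members of a top or of a star are automatically adjacent, so inside either one compatibility coincides with ortho-adjacency; this is what lets me work entirely with the orthogonality condition.

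First I would treat a top $\mathcal{T}=\langle Y]_{k}$ with $Y\in{\mathcal G}_{k+1}(H)$. Each element of $\mathcal{T}$ is a $k$-dimensional subspace of the $(k+1)$-dimensional space $Y$, hence is the orthogonal complement within $Y$ of a unique line, giving a bijection between $\mathcal{T}$ and ${\mathcal G}_{1}(Y)$. For distinct $Z_{1},Z_{2}\in\mathcal{T}$ the intersection $M=Z_{1}\cap Z_{2}$ is $(k-1)$-dimensional, and ortho-adjacency means $Z_{i}=\langle u_{i}\rangle + M$ with $u_{1}\perp u_{2}$ and both $u_{i}\perp M$. Writing $Y=\langle u_{1},u_{2}\rangle\oplus M$, the line $\ell_{i}=Z_{i}^{\perp_{Y}}$ lies in $\langle u_{1},u_{2}\rangle$ and satisfies $\ell_{1}=\langle u_{2}\rangle$, $\ell_{2}=\langle u_{1}\rangle$, so $\ell_{1}\perp\ell_{2}$; conversely orthogonality of the lines yields ortho-adjacency. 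Thus compatible subsets of $\mathcal{T}$ correspond exactly to families of mutually orthogonal lines in $Y$, i.e. to orthonormal subsets of $Y$. Any orthonormal basis $e_{1},\dots,e_{k+1}$ of $Y$ produces the compatible family $\{\,\mathrm{span}\{e_{j}:j\neq i\}\,\}_{i}$, which is maximal and has exactly $k+1$ elements, since $\dim Y=k+1$.

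The star case is parallel. For $\mathcal{S}=[X\rangle_{k}$ with $X\in{\mathcal G}_{k-1}(H)$ I would use $H=X\oplus X^{\perp}$ and observe that every $Z\in\mathcal{S}$ is recovered as $Z=X\oplus(Z\cap X^{\perp})$, with $Z\cap X^{\perp}$ a line in $X^{\perp}$; this identifies $\mathcal{S}$ with ${\mathcal G}_{1}(X^{\perp})$. Distinct members meet precisely in $X$, so they are adjacent, and the compatibility decomposition forces the two associated lines in $X^{\perp}$ to be orthogonal, exactly as above. Hence compatible subsets of $\mathcal{S}$ correspond to orthonormal subsets of $X^{\perp}$, and maximal ones to orthonormal bases of $X^{\perp}$. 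When $H$ is finite-dimensional this gives $\dim X^{\perp}=\dim H-(k-1)=\dim H-k+1$ elements; when $H$ is infinite-dimensional $X^{\perp}$ is infinite-dimensional, so such bases are infinite, as claimed.

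The one genuinely delicate step is establishing the dictionary between ortho-adjacency and orthogonality of the associated lines; everything else is bookkeeping. I expect no trouble moving between pairwise and global compatibility, since compatibility is a pairwise condition by definition and orthogonality of a family of lines is pairwise as well, so the line-correspondence simultaneously delivers the upper bound on an arbitrary compatible subset and the explicit maximal families achieving it.
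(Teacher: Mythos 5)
Your proof is correct, and since the paper dismisses this lemma with ``Easy verification,'' your argument---identifying a top $\langle Y]_{k}$ with ${\mathcal G}_{1}(Y)$ via $Z\mapsto Z^{\perp}\cap Y$ and a star $[X\rangle_{k}$ with ${\mathcal G}_{1}(X^{\perp})$ via $Z\mapsto Z\cap X^{\perp}$, and checking that compatibility translates precisely to orthogonality of the associated lines---is exactly the routine verification the author intends, with the counts then following from the size of a maximal orthogonal family of lines in a space of dimension $k+1$, respectively $\dim H-k+1$.
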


\begin{proof}
Easy verification.
\end{proof}

\begin{lemma}\label{lemma0-3}
Every geodesic of $\Gamma_{k}(H)$ connecting compatible elements is formed by mutually compatible elements.
Any two compatible elements of ${\mathcal G}_{k}(H)$ are contained in a geodesic of $\Gamma_{k}(H)$ connecting orthogonal elements.
\end{lemma}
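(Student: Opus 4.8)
The plan is to handle both assertions by first factoring out the common part $M=X\cap Y$ of the two compatible subspaces and reducing to a configuration of \emph{orthogonal} subspaces, and then to exploit the very rigid structure that any geodesic of $\Gamma_{k}(H)$ is forced to have. Throughout I fix the compatibility decomposition $X=M\oplus X'$, $Y=M\oplus Y'$ with $M,X',Y'$ mutually orthogonal and $\dim X'=\dim Y'=d$, where $d=d_{k}(X,Y)=k-\dim M$. The single tool I would establish first is an \emph{interval property}: if $Z$ lies on a geodesic of $\Gamma_{k}(H)$ joining $X$ and $Y$, then $M\subseteq Z\subseteq X+Y$. This is a short dimension count: if $d_{k}(X,Z)=i$ then $\dim(X\cap Z)=k-i$ and $\dim(Y\cap Z)=k-d+i$, so the modular identity applied to $X\cap Z$ and $Y\cap Z$ (whose sum lies in $Z$) gives $\dim(X\cap Y\cap Z)\ge 2k-d-k=k-d=\dim M$, forcing $M\subseteq Z$; the dual count with $X+Z$ and $Y+Z$ yields $Z\subseteq X+Y$.

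For the first assertion, let $X=Z_{0},\dots,Z_{d}=Y$ be a geodesic. By the interval property each $Z_{i}$ satisfies $M\subseteq Z_{i}\subseteq X+Y=M\oplus(X'\oplus Y')$; writing $S:=X'\oplus Y'$ and using $M\perp S$, this gives $Z_{i}=M\oplus W_{i}$ with $W_{i}=Z_{i}\cap S$, $\dim W_{i}=d$, $W_{0}=X'$, $W_{d}=Y'$. Since $Z_{i}\cap Z_{j}=M\oplus(W_{i}\cap W_{j})$, the $W_{i}$ form a geodesic of $\Gamma_{d}(S)$ joining the orthogonal subspaces $X',Y'$, and being on a geodesic gives $\dim(W_{i}\cap X')=d-i$ and $\dim(W_{i}\cap Y')=i$, whose sum equals $d=\dim W_{i}$; hence $W_{i}=A_{i}\oplus B_{i}$ orthogonally, with $A_{i}=W_{i}\cap X'$ and $B_{i}=W_{i}\cap Y'$. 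Adjacency of consecutive $W_{i},W_{i+1}$ forces equality in the estimate $\dim(W_{i}\cap W_{i+1})=\dim(A_{i}\cap A_{i+1})+\dim(B_{i}\cap B_{i+1})\le(d-i-1)+i$, whence $A_{i+1}\subseteq A_{i}$ and $B_{i}\subseteq B_{i+1}$: the $A_{i}$ form a decreasing and the $B_{i}$ an increasing complete flag. Choosing an orthonormal basis of $X'$ adapted to $(A_{i})$ and one of $Y'$ adapted to $(B_{i})$, each $W_{i}$, and therefore each $Z_{i}=M\oplus W_{i}$, is spanned by a subset of one fixed orthonormal basis of $X+Y$; extending it to $H$ exhibits all the $Z_{i}$ as mutually compatible.

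For the second assertion I would extend the geodesic from $X$ to $Y$ beyond $Y$ until the far endpoint becomes orthogonal to $X$. Concretely, set $X^{o}=X$ and, using $\dim H\ge 2k$, choose an $(k-d)$-dimensional subspace $N$ orthogonal to $X+Y'$, and put $Y^{o}=Y'\oplus N$. Then $\dim Y^{o}=k$ and $Y^{o}\perp X$ (as $Y'$ and $N$ are orthogonal to $M\oplus X'=X$), so $X^{o},Y^{o}$ are orthogonal elements with $d_{k}(X^{o},Y^{o})=k$. A one-line intersection count gives $Y\cap Y^{o}=Y'$, hence $d_{k}(Y,Y^{o})=k-d$, while $d_{k}(X^{o},Y)=d$; thus $d_{k}(X^{o},Y)+d_{k}(Y,Y^{o})=k=d_{k}(X^{o},Y^{o})$. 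Concatenating any geodesic from $X^{o}=X$ to $Y$ with any geodesic from $Y$ to $Y^{o}$ gives a walk of length $d_{k}(X^{o},Y^{o})$, which is therefore a geodesic of $\Gamma_{k}(H)$ joining the orthogonal subspaces $X^{o},Y^{o}$ and containing both $X$ and $Y$.

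The real work is concentrated in the first assertion: proving the interval property and then extracting, from mere adjacency along the geodesic, the fact that the traces $A_{i}=W_{i}\cap X'$ and $B_{i}=W_{i}\cap Y'$ are nested. Once these two flags are in hand the common adapted basis makes compatibility transparent, and the second assertion reduces to the easy explicit construction above, which rests only on the hypothesis $\dim H\ge 2k$.
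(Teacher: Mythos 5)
Your proposal is correct, but it is not comparable to an argument in the paper itself: the paper disposes of this lemma with a bare citation (``See \cite[Section 3]{Pankov2}''), so what you have written is a genuinely self-contained substitute for the external reference. Your proof rests on two clean ideas: first, the interval property that every vertex $Z$ of a geodesic joining $X$ and $Y$ satisfies $X\cap Y\subseteq Z\subseteq X+Y$ (your dimension counts for this are right, and notably this step needs no compatibility at all); second, the splitting $W_{i}=A_{i}\oplus B_{i}$ with $A_{i}=W_{i}\cap X'$ decreasing and $B_{i}=W_{i}\cap Y'$ increasing as complete flags, which exhibits all vertices of the geodesic as spanned by subsets of one orthonormal basis of $H$ --- this gives mutual compatibility of the whole geodesic in one stroke, exactly as required. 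Two steps are asserted without justification but do hold and deserve a line each: the identity $\dim(W_{i}\cap W_{i+1})=\dim(A_{i}\cap A_{i+1})+\dim(B_{i}\cap B_{i+1})$ is not a general fact about direct sums, but is true here because the $X'$- and $Y'$-components of any vector of $S=X'\oplus Y'$ lying in $W_{i}$ must themselves lie in $A_{i}$ and $B_{i}$ respectively; and in the second assertion, the concatenated walk of length $k=d_{k}(X^{o},Y^{o})$ is a geodesic because a walk whose length equals the distance between its endpoints cannot repeat a vertex. Your construction of $Y^{o}=Y'\oplus N$ with $N\perp X+Y$ uses precisely the standing hypothesis $\dim H\ge 2k$, which is the correct place for it. The trade-off relative to the paper is the obvious one: the citation keeps the exposition short and leans on \cite{Pankov2}, while your argument costs a page of elementary linear algebra but makes the lemma verifiable without consulting another paper.
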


\begin{proof}
See \cite[Section 3]{Pankov2}.
\end{proof}

Let $X,Y\in {\mathcal G}_{k}(H)$. 
Denote by ${\mathcal X}_{k}(X,Y)$ the set of all 
$Z\in {\mathcal G}_{k}(H)$ such that $$P_{X}+P_{Y}-P_{Z}\in {\mathcal P}_{k}(H).$$ 
In other words, $Z\in {\mathcal G}_{k}(H)$ belongs to ${\mathcal X}_{k}(X,Y)$ if and only if 
there is $Z'\in {\mathcal G}_{k}(H)$ satisfying
$$P_{X}+P_{Y}=P_{Z}+P_{Z'}.$$
Since the image of $P_{X}+P_{Y}$ coincides with $X+Y$, the set ${\mathcal X}_{k}(X,Y)$ is contained in 
${\mathcal G}_{k}(X+Y)$. 
Also, every element of ${\mathcal X}_{k}(X,Y)$ contains $X\cap Y$
(this follows from the fact that $(P_{X}+P_{Y})(x)=2x$ if and only if $x\in X\cap Y$).
Therefore, 
$${\mathcal X}_{k}(X,Y)\subset [X\cap Y, X+Y]_{k}.$$
The converse inclusion holds only in the case when $X$ and $Y$ are compatible.
In particular, ${\mathcal X}_{k}(X,Y)$ coincides with ${\mathcal G}_{k}(X+Y)$ if and only if $X$ and $Y$ are orthogonal.

\begin{lemma}[Geh\'er \cite{Geher}]\label{lemma0-4}
The set ${\mathcal X}_{k}(X,Y)$ is a $1$-dimensional real manifold if and only if
$X,Y\in{\mathcal G}_{k}(H)$ are non-compatible and adjacent.
\end{lemma}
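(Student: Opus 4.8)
The plan is to reduce the statement to a computation about a pair of rank-one projections inside a two-dimensional Hilbert space, via the decomposition of $X+Y$ into principal-angle blocks. First I would use the two containments already recorded in the text: every $Z\in{\mathcal X}_{k}(X,Y)$ satisfies $X\cap Y\subset Z\subset X+Y$. Writing $E=X\cap Y$ and $d=d_{k}(X,Y)=k-\dim E$, and setting $G=(X+Y)\ominus E$ (so $\dim G=2d$), the assignment $Z\mapsto Z\ominus E$ identifies $[E,X+Y]_{k}$ with ${\mathcal G}_{d}(G)$. Since $P_{X}=P_{E}+P_{X_{0}}$ and $P_{Y}=P_{E}+P_{Y_{0}}$ with $X_{0}=X\ominus E$ and $Y_{0}=Y\ominus E$ contained in $G$, the requirement $P_{X}+P_{Y}-P_{Z}\in{\mathcal P}_{k}(H)$ is equivalent, after subtracting $\mathrm{Id}_{E}$, to the requirement that $M-P_{Z_{0}}$ be a rank-$d$ projection on $G$, where $M=P_{X_{0}}+P_{Y_{0}}$ and $Z_{0}=Z\ominus E$. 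Thus ${\mathcal X}_{k}(X,Y)$ is homeomorphic to the set of $d$-dimensional $Z_{0}\subset G$ with $M-P_{Z_{0}}$ a projection, and I only need to analyse this reduced set, in which $X_{0}\cap Y_{0}=0$ and all principal angles of $X_{0},Y_{0}$ lie in $(0,\pi/2]$.

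The decisive computation is the adjacent case $d=1$. Here $G$ is two-dimensional, $M=P_{X_{0}}+P_{Y_{0}}$ has eigenvalues $1\pm c$ with $c=\cos\alpha$ for the single principal angle $\alpha$, and I must find every unit $z_{0}\in G$ for which $M-P_{z_{0}}$ is a rank-one projection. In the orthonormal eigenbasis $e_{\pm}$ of $M$, writing $z_{0}=u_{1}e_{+}+u_{2}e_{-}$ and $t=|u_{1}|^{2}$, the trace condition $\operatorname{tr}(M-P_{z_{0}})=1$ holds automatically while $\det(M-P_{z_{0}})=0$ reduces to $2ct=c(1+c)$; a self-adjoint $2\times2$ matrix with trace $1$ and determinant $0$ is automatically a rank-one projection. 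Non-compatibility means $c\ne 0$, so this forces $t=(1+c)/2$, pinning both coordinate moduli $\sqrt{(1\pm c)/2}$ (both nonzero since $0<c<1$) and leaving only the relative phase free. Hence the admissible $z_{0}$ form a single circle in ${\mathcal G}_{1}(G)$, and ${\mathcal X}_{k}(X,Y)$ is a $1$-dimensional real manifold. The crux is exactly this division by $c$, which is legitimate only in the non-compatible case and which collapses a two-parameter family of lines to a one-parameter circle.

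The remaining cases I would dispose of by exhibiting at least two independent parameters. If $X,Y$ are compatible then every principal angle is $\pi/2$, so $M=\mathrm{Id}_{G}$ and $M-P_{Z_{0}}=P_{G\ominus Z_{0}}$ is a projection for \emph{every} $Z_{0}$; thus ${\mathcal X}_{k}(X,Y)\cong{\mathcal G}_{d}(G)$ has real dimension $2d^{2}\ge 2$ (and reduces to a single point when $X=Y$). If $X,Y$ are non-compatible and non-adjacent then $d\ge 2$, and the principal-angle decomposition splits $G$ into $d$ orthogonal two-dimensional blocks; in each block the admissible lines form a positive-dimensional family (a circle for a generic angle, all of ${\mathcal G}_{1}$ for a right angle), so taking $Z_{0}$ block-diagonal produces a smoothly embedded submanifold of ${\mathcal X}_{k}(X,Y)$ of dimension $\ge d\ge 2$. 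In either situation ${\mathcal X}_{k}(X,Y)$ contains a submanifold of dimension $\ge 2$ and therefore cannot be a $1$-dimensional manifold, which closes the equivalence.

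I expect the main obstacle to be twofold. First, I must set up the principal-angle (CS) block decomposition of $X+Y$ cleanly enough that the reduction to the two-dimensional problem is fully rigorous, including the identification of ${\mathcal X}_{k}(X,Y)$ with the reduced solution set as topological spaces. Second, I must verify that in the adjacent case the circle really is the \emph{entire} solution set — which it is, because $\dim Z_{0}=1$ inside the two-dimensional $G$ leaves no non-block-diagonal solutions to overlook — and that the smooth-embedding and dimension arguments of the last paragraph genuinely rule out a $1$-manifold structure in the other cases.
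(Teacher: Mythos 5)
Your argument is correct, but there is nothing in this paper to compare it against: the lemma is stated with the attribution to Geh\'er \cite{Geher} and no proof is given here — it is quoted as a black box (it is Lemma 2 of \cite{Geher}). Your reconstruction follows essentially the same path as Geh\'er's original argument, which likewise rests on two-subspace (principal-angle) analysis: split off $E=X\cap Y$, reduce to $M=P_{X_{0}}+P_{Y_{0}}$ acting on $G=(X+Y)\ominus E$, and analyse the $2\times 2$ blocks. The computations you outline do check out: with $t=|u_{1}|^{2}$ one gets $\det(M-P_{z_{0}})=c(2t-1-c)$, so for $c\ne 0$ the solution set is the circle $t=(1+c)/2$ (both coordinate moduli nonzero, only the relative phase free), while for $c=0$ it is all of ${\mathcal G}_{1}(G)$, homeomorphic to $S^{2}$; in the compatible case ${\mathcal X}_{k}(X,Y)\cong{\mathcal G}_{d}(G)$ has real dimension $2d^{2}\ne 1$; and in the non-compatible, non-adjacent case the block-diagonal solutions give a continuous injection of a product of $d\ge 2$ circles and spheres into ${\mathcal X}_{k}(X,Y)$, hence (compact into Hausdorff) a subspace homeomorphic to a manifold of dimension at least $2$. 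The one step you should spell out, as you yourself suspect, is the final exclusion: a $1$-dimensional manifold cannot contain a subspace homeomorphic to a manifold of dimension $\ge 2$. This needs invariance of domain, or an elementary substitute: a continuous injection of a closed $2$-disc into a space locally homeomorphic to ${\mathbb R}$ would embed the disc into an interval, which is impossible because removing an interior point disconnects the interval but not the punctured disc. With that sentence added, your proof is complete and self-contained, and in substance it reproduces the cited one.
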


\subsection{Geometric version of Wigner's theorem for Hilbert Grassmannians}
Every linear or conjugate-linear isometry of $H$ to itself induces an injective transformation of ${\mathcal G}_{k}(H)$
preserving the adjacency and orthogonality relations in both directions
(note that pairs of orthogonal $k$-dimensional subspaces exist only in the case when $\dim H\ge 2k$).
If $\dim H=2k$, 
then the same holds for the orthocomplementation map which sends every $X\in {\mathcal G}_{k}(H)$ to $X^{\perp}\in {\mathcal G}_{k}(H)$.
To prove Theorem \ref{theorem1} we use the following.

\begin{theorem}[Pankov \cite{Pankov2}]\label{Pank-theorem}
If $\dim H>2k>2$, then every transformation of ${\mathcal G}_{k}(H)$ preserving the adjacency and orthogonality relations
is induced by a linear or conjugate-linear isometry.
\end{theorem}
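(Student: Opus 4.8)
The plan is to manufacture a semilinear injection $s\colon H\to H$ that induces $f$, meaning $f(Z)=s(Z)$ (image subspace) for every $Z\in\mathcal G_k(H)$, and then to exploit orthogonality together with Lemma \ref{lemma0-1} to recognize $s$ as a linear or conjugate-linear isometry. Two preliminary remarks guide everything: the hypothesis $\dim H>2k$ forces $k\ge 2$, so $\Gamma_k(H)$ carries the two families of maximal cliques (stars and tops) recalled above; and, using that $f$ respects orthogonality, one first checks that $f$ is injective, since a $k$-subspace is recovered from the family of $k$-subspaces orthogonal to it (here $X^{\perp\perp}=X$ and $\dim X^{\perp}=\dim H-k>k$ are used).

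Next I would analyze the action of $f$ on maximal cliques. As $f$ preserves adjacency it sends cliques to cliques, hence every maximal clique into a maximal clique. A star $[X\rangle_{k}$ is, as a vertex set, the projective space $\mathbb P(H/X)$ of projective dimension $\dim H-k$, whereas a top $\langle Y]_{k}$ is a projective space of projective dimension $k$. Being an injection that preserves adjacency, $f$ restricts on each maximal clique to a line-preserving injection of the corresponding projective space, and such a map cannot carry the larger star into the smaller top precisely when $\dim H-k>k$, that is when $\dim H>2k$. Hence $f$ maps stars to stars and tops to tops, and therefore sends each pencil $[X,Y]_{k}$ (the intersection of a star with a top, $\dim X=k-1$, $\dim Y=k+1$) into a pencil. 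In other words $f$ preserves the line structure of the Grassmann space $\mathcal G_{k}(H)$.

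To build $s$ I would work inside a single star. Fixing $X\in\mathcal G_{k-1}(H)$, the vertex set of $[X\rangle_{k}$ is $\mathbb P(H/X)$, and $f$ restricts to an injection of $\mathbb P(H/X)$ into $\mathbb P(H/X')$, where $[X'\rangle_{k}$ is the image star, sending lines to subsets of lines; its image is not contained in a line, as it contains the injective image of a projective plane because $\dim(H/X)=\dim H-k+1\ge 3$. Theorem \ref{FTPG} then produces a semilinear injection $H/X\to H/X'$ inducing this restriction. The main obstacle will be the Chow-type coherence step: one must glue these quotient maps, one for each star, into a single semilinear injection $s$ of $H$, verifying that their associated ring homomorphisms coincide and that the maps agree on overlaps of distinct stars. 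This is exactly where $\dim H>2k$ is essential, for when $\dim H=2k$ the star and top families have equal projective dimension and the orthocomplementation map supplies a genuinely different, non-semilinear solution. Once $s$ is obtained, $f(Z)=s(Z)$ holds for every $Z\in\mathcal G_{k}(H)$, since every $k$-subspace lies in some star.

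Finally I would feed in orthogonality to pin down $s$. Given orthogonal vectors $x\perp y$, the inequality $\dim H>2k$ lets me choose orthogonal subspaces $X,Y\in\mathcal G_{k}(H)$ with $\langle x\rangle\subset X$ and $\langle y\rangle\subset Y$. Then $s(X)=f(X)$ and $s(Y)=f(Y)$ are orthogonal because $f$ preserves orthogonality, and since $s(x)\in s(X)$ and $s(y)\in s(Y)$ we conclude $s(x)\perp s(y)$. Thus the semilinear injection $s$ preserves the orthogonality relation on $H$, so Lemma \ref{lemma0-1} shows that $s$ is a linear or conjugate-linear isometry. As $f$ is induced by $s$, this completes the argument.
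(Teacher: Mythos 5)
You should note first that the paper does not prove this statement at all: it is imported verbatim from \cite{Pankov2}, so the comparison below is with the method that actually works there (and which is mirrored by Lemmas \ref{lemma1-1}--\ref{lemma1-3} in Section 4 of the present paper). Your sketch has three genuine gaps. First, injectivity: the hypotheses give adjacency and orthogonality preservation in one direction only, so your claim that $X$ is ``recovered from the family of $k$-subspaces orthogonal to it'' through $f$ is circular. If $f(X)=f(Y)$ with $X\ne Y$, then for $Z\perp X$ you learn only that $f(Z)\perp f(X)=f(Y)$, which contradicts nothing; to run your argument you would need orthogonality preserved in both directions, which is not assumed. Second, and most seriously, the star-versus-top step fails: inside a maximal clique \emph{every} pair of vertices is adjacent, so the restriction of an adjacency preserving map to a star is just an injection into some maximal clique and carries no projective-line structure whatsoever -- your phrase ``line-preserving injection of the corresponding projective space'' is unjustified (pencil preservation is normally a \emph{consequence} of knowing stars go to stars and tops to tops, so you are assuming what you set out to prove). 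Moreover, a bare injection of the star ($\mathbb{P}(H/X)$, projective dimension $\dim H-k$) into the smaller top (projective dimension $k$) cannot be excluded by size, since both sets have the cardinality of the continuum; even line-to-line injections into complex projective spaces of smaller dimension exist for pathological semilinear maps, which is exactly why Theorem \ref{FTPG} fails without injectivity (see the Remark after it). The argument that actually works uses orthogonality here, not in the gluing: one first shows $f$ is ortho-adjacency preserving by transporting geodesics between orthogonal elements (Lemma \ref{lemma0-3}; this is precisely how Lemma \ref{lemma1-2} is proved in this paper), and then compares maximal compatible subsets of cliques via Lemma \ref{lemma0-2}: a top contains at most $k+1$ mutually compatible elements while a star contains $\dim H-k+1>k+1$ of them. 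This is where the hypothesis $\dim H>2k$ earns its keep; your assertion that it is ``essential'' in the gluing step misplaces it. (Minor slip in the same spirit: $k\ge 2$ follows from $2k>2$, not from $\dim H>2k$.)

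Third, the ``Chow-type coherence step'' that you flag as the main obstacle -- gluing one FTPG-produced quotient map per star into a single semilinear $s$, matching the associated field homomorphisms on overlaps -- is simply left undone, and it is the hardest part of your route. The established proof avoids it entirely: once stars go to stars, $f$ induces a map $f_{k-1}:\mathcal{G}_{k-1}(H)\to\mathcal{G}_{k-1}(H)$ with $f([X\rangle_k)\subset[f_{k-1}(X)\rangle_k$, one descends recursively to $f_1$ on $\mathcal{G}_1(H)$, applies Theorem \ref{FTPG} \emph{once} to $f_1$, upgrades the resulting semilinear injection to a linear or conjugate-linear isometry by Lemma \ref{lemma0-1}, and reassembles $f$ from the inclusions $f_i([X\rangle_i)\subset[f_{i-1}(X)\rangle$ -- exactly the scheme carried out for Theorem \ref{theorem2} with $k\ge 2$ in Section 4. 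Your final step (choosing orthogonal $X\supset\langle x\rangle$ and $Y\supset\langle y\rangle$ to see that the semilinear map preserves orthogonality, then invoking Lemma \ref{lemma0-1}) is sound as far as it goes, but it sits on top of the three unsupported stages above, so the proposal as written does not constitute a proof.
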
 

For $\dim H=2k$ there is the following weak version of the above result.

\begin{prop}[Pankov \cite{Pankov2}]\label{Pank-prop}
If $\dim H=2k>2$ and $f$ is an orthogonality preserving transformation of ${\mathcal G}_{k}(H)$
which also preserves the adjacency relation in both directions, 
then $f$ is induced by a unitary or anti-unitary operator or 
it is the composition of the orthocomplementation and the transformation induced by a unitary or anti-unitary operator.
\end{prop}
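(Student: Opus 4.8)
The plan is to translate $L$ into a map of Grassmannians and to reconstruct it from the adjacency and orthogonality relations, using condition (L3) to control how far the images of projections can spread. First I would record two elementary facts. Since ${\rm tr}(P)=k$ for $P\in{\mathcal P}_k(H)$ while ${\rm tr}(L(P))=m$, linearity together with the fact that ${\mathcal F}_s(H)$ is spanned by ${\mathcal P}_k(H)$ gives ${\rm tr}(L(A))=(m/k){\rm tr}(A)$ for every $A$. Writing $f(X)$ for the image of $L(P_X)$, condition (L2) means that $f:{\mathcal G}_k(H)\to{\mathcal G}_m(H')$ is injective. The decisive algebraic point is that $L$ transfers the sets ${\mathcal X}_k$: if $P_X+P_Y=P_Z+P_{Z'}$, then applying $L$ yields $P_{f(X)}+P_{f(Y)}=P_{f(Z)}+P_{f(Z')}$, so $f({\mathcal X}_k(X,Y))\subseteq{\mathcal X}_m(f(X),f(Y))$. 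Via the description of ${\mathcal X}_k(X,Y)$ recorded before Lemma \ref{lemma0-4}---it equals ${\mathcal G}_k(X+Y)$ exactly when $X,Y$ are orthogonal and contains $[X\cap Y,X+Y]_k$ exactly when they are compatible---this lets $f$ carry the internal characterizations of orthogonality and compatibility from ${\mathcal G}_k(H)$ to ${\mathcal G}_m(H')$.

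To obtain $k\le m$ when $\dim H\ge 2k$ I would invoke Geh\'er's Lemma \ref{lemma0-4}: since ${\mathcal X}_k(X,Y)$ is a $1$-dimensional real manifold precisely for non-compatible adjacent $X,Y$ and $f$ embeds it injectively into ${\mathcal X}_m(f(X),f(Y))$, the map $f$ preserves adjacency and hence sends each maximal clique of $\Gamma_k(H)$ into a maximal clique of $\Gamma_m(H')$. Take a top $\langle T]_k$ with $T\in{\mathcal G}_{k+1}(H)$ (it exists because $\dim H\ge 2k$); it is the projective space of a $(k+1)$-dimensional space, and the ${\mathcal X}$-transfer shows that its lines go to subsets of lines of the image clique. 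Theorem \ref{FTPG} then yields a semilinear injection whose domain has dimension $k+1$. If the image clique is a top of $\Gamma_m(H')$ its ambient dimension is $m+1$, forcing $k+1\le m+1$; the alternative that the top lands in a star is excluded by the usual type dichotomy for adjacency preserving maps, applying the same argument to a star of $\Gamma_k(H)$, whose projective dimension is at least $k$ since $\dim H\ge 2k$. In every case $k\le m$.

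For the structural statement I take $H$ infinite-dimensional---so $\dim H\ge 2k$ and $k\le m$ already hold---and assume (L3), which says $d_m(f(X),f(Y))\le k$, i.e.\ the images never separate beyond the diameter of $\Gamma_k(H)$. Together with adjacency preservation this should upgrade $f$ to an isometric embedding, $d_m(f(X),f(Y))=d_k(X,Y)$. Using Lemma \ref{lemma0-3} and the orthogonality criterion I would check that $f$ preserves orthogonality and compatibility in both directions; then Lemma \ref{lemma0-2}, where infinite-dimensionality makes the maximal compatible subset of a star infinite while that of every top of $\Gamma_m(H')$ has only $m+1$ elements, forces $f$ to send stars to stars and thereby eliminates the orthocomplementation/dual behaviour and the exceptional case $\dim H=2k$. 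Next I would extract from (L3) a common $(m-k)$-dimensional subspace $W=\bigcap_X f(X)$, so that $\tilde f(X):=f(X)\ominus W$ defines an injective, adjacency- and orthogonality-preserving map ${\mathcal G}_k(H)\to{\mathcal G}_k(W^{\perp})$ sending stars to stars. Applying Theorem \ref{FTPG} star by star and gluing produces a semilinear injection $U:H\to W^{\perp}$ inducing $\tilde f$, and Lemma \ref{lemma0-1} turns the orthogonality preservation of $U$ into the statement that $U$ is a linear or conjugate-linear isometry; thus $f(X)=U(X)+W$. Finally $L(P_X)=P_{U(X)+W}=L_{U,W}(P_X)$ for all $X\in{\mathcal G}_k(H)$, and since ${\mathcal F}_s(H)$ is spanned by ${\mathcal P}_k(H)$ the linear operators $L$ and $L_{U,W}$ coincide.

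The hard part will be the simultaneous construction, carried out in the previous paragraph, of the isometric embedding property and of a single common subspace $W$ lying in every $f(X)$ out of the merely pairwise hypothesis (L3); this is exactly where Geh\'er's $1$-manifold criterion (to lock adjacency), the upper distance bound from (L3), and infinite-dimensionality through Lemma \ref{lemma0-2} (to defeat the dual/orthocomplement alternative) have to be orchestrated together. Once $f$ is known to be an isometric embedding sending stars to stars with common core $W$, the remainder is a routine application of the Fundamental Theorem of Projective Geometry and of Lemma \ref{lemma0-1}.
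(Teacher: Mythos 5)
Your proposal does not prove the statement in question: what you have sketched is the paper's Theorem \ref{theorem2}, not Proposition \ref{Pank-prop}. The proposition is about a transformation $f$ of ${\mathcal G}_{k}(H)$ for a \emph{finite-dimensional} space with $\dim H=2k>2$, assumed orthogonality preserving and adjacency preserving in both directions, and its conclusion is a dichotomy: $f$ comes from a unitary or anti-unitary operator, or from such an operator composed with the orthocomplementation $X\mapsto X^{\perp}$. Nothing in your argument engages these hypotheses. You start from a linear operator $L$ on ${\mathcal F}_{s}(H)$ satisfying (L1)--(L3), you assume $H$ infinite-dimensional, and you conclude $L=L_{U,W}$ --- none of which appears in the proposition. (For the record, this paper does not prove the proposition either: it imports it from \cite{Pankov2} and uses it, together with Theorem \ref{Pank-theorem}, as an ingredient in the proof of Theorem \ref{theorem1}.)

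The mismatch is not merely formal; your central mechanism fails exactly in the proposition's setting. You propose to use Lemma \ref{lemma0-2} and infinite-dimensionality to force stars to go to stars and thereby, in your words, to ``eliminate the orthocomplementation/dual behaviour and the exceptional case $\dim H=2k$.'' But when $\dim H=2k$ every maximal compatible subset of a star has $\dim H-k+1=k+1$ elements, the same as for a top, so stars and tops of $\Gamma_{k}(H)$ cannot be distinguished by this count: an adjacency preserving map may legitimately send stars to tops, and this is precisely why the orthocomplementation class occurs in the conclusion. A genuine proof of Proposition \ref{Pank-prop} must confront this ambiguity rather than exclude it: using that $f$ preserves adjacency in both directions, one shows it maps maximal cliques onto the two types coherently --- either all stars to stars, in which case an induced map on ${\mathcal G}_{k-1}(H)$ (or on ${\mathcal G}_{1}(H)$ after descending) together with Theorem \ref{FTPG}, orthogonality preservation, and Lemma \ref{lemma0-1} yields a unitary or anti-unitary operator; or all stars to tops, in which case composing $f$ with the orthocomplementation (which swaps the two clique types and preserves orthogonality) reduces to the first case. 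That argument, carried out in \cite{Pankov2}, is what the proposition records; your sketch, whatever its merits as an outline of Theorem \ref{theorem2}, leaves it untouched.
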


\section{Proof of Theorems 1 and 2}
Let $\dim H\ge 2k$ and let $L:{\mathcal F}_{s}(H)\to {\mathcal F}_{s}(H)$ be a linear operator satisfying the following conditions:
\begin{enumerate}
\item[(L1)] $L({\mathcal P}_{k}(H))\subset {\mathcal P}_{m}(H')$,
\item[(L2)]  the restriction of $L$ to ${\mathcal P}_{k}(H)$ is injective.
\end{enumerate}
Consider the injection $f:{\mathcal G}_{k}(H)\to {\mathcal G}_{m}(H')$ induced by $L$, i.e. 
$$L(P_{X})=P_{f(X)}$$
for every $X\in {\mathcal G}_{k}(H)$.
It is clear that for any $X,Y\in {\mathcal G}_{k}(H)$ we have 
$$f({\mathcal X}_{k}(X,Y))\subset {\mathcal X}_{m}(f(X),f(Y)).$$
In the case when $X,Y$ are orthogonal,
${\mathcal X}_{k}(X,Y)$ coincides with ${\mathcal G}_{k}(X+Y)$ and 
$$f({\mathcal G}_{k}(X+Y))\subset {\mathcal X}_{m}(f(X),f(Y))\subset [f(X)\cap f(Y), f(X)+f(Y)]_{m}.$$
This means that $L$ transfers ${\mathcal F}_{s}(X+Y)$ to a subspace of ${\mathcal F}_{s}(f(X)+f(Y))$\footnote{For every closed subspace
$Z\subset H$ the subspace of ${\mathcal F}_{s}(H)$ formed by all operators whose images are contained in $H$ can be naturally identified with
${\mathcal F}_{s}(Z)$.}.
Since  ${\mathcal F}_{s}(X+Y)$ and ${\mathcal F}_{s}(f(X)+f(Y))$ are finite-dimensional vector spaces and $f$ is induced by $L$,
the restriction of $f$ to ${\mathcal G}_{k}(X+Y)$ is continuous. 
By (L2) and the compactness of ${\mathcal G}_{k}(X+Y)$, this restriction is a homeomorphism on $f({\mathcal G}_{k}(X+Y))$.
Therefore, 
\begin{equation}\label{eq1-1}
[f(X)\cap f(Y), f(X)+f(Y)]_{m}
\end{equation}
contains a $(2k^{2})$-dimensional real manifold. This implies that $k\le m$.

From this moment, we assume that the following additional condition holds:
\begin{enumerate} 
\item[(L3)] for any $P,Q\in {\mathcal P}_{k}(H)$
the dimension of the intersection of the images of $L(P)$ and $L(Q)$ is not less than $m-k$.
\end{enumerate}
In the case when $k=m$, this condition holds trivially. 

\begin{lemma}\label{lemma1-1} 
The following assertions are fulfilled:
\begin{enumerate}
\item[(A)] if $X,Y\in {\mathcal G}_{k}(H)$ are orthogonal, then $f(X)$ and $f(Y)$ are compatible and 
$$\dim(f(X)\cap f(Y))=m-k.$$
\item[(B)] $f$ is adjacency preserving in both directions.
\end{enumerate}
\end{lemma}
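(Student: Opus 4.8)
The plan for (A) is to push to an equality the dimension count already begun, now using (L3) to squeeze from the other side. I fix orthogonal $X,Y\in{\mathcal G}_{k}(H)$, so that ${\mathcal X}_{k}(X,Y)={\mathcal G}_{k}(X+Y)$ and $f$ restricts to a homeomorphism of the real $2k^{2}$-manifold ${\mathcal G}_{k}(X+Y)$ onto its image, which lies in $[f(X)\cap f(Y),f(X)+f(Y)]_{m}$. Writing $a=\dim(f(X)\cap f(Y))$, this interval is homeomorphic to ${\mathcal G}_{m-a}(\mathbb{C}^{2(m-a)})$, of real dimension $2(m-a)^{2}$. Invariance of domain gives $2k^{2}\le 2(m-a)^{2}$, hence $a\le m-k$, while (L3) gives $a\ge m-k$; thus $a=m-k$, which is the displayed identity. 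The interval then also has real dimension $2k^{2}$, so $f({\mathcal G}_{k}(X+Y))$ is open in it by invariance of domain and closed by compactness, hence equals it (the interval being connected). Consequently ${\mathcal X}_{m}(f(X),f(Y))=[f(X)\cap f(Y),f(X)+f(Y)]_{m}$, and by the remark preceding Lemma~\ref{lemma0-4} this holds exactly when $f(X),f(Y)$ are compatible. This proves (A).

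For (B) the central idea is to upgrade the inclusion $f({\mathcal X}_{k}(X,Y))\subset{\mathcal X}_{m}(f(X),f(Y))$ to an \emph{equality} over finite-dimensional pieces. Given $X\ne Y$ I embed $X+Y$ in a $2k$-dimensional $S=X_{0}+Y_{0}$ with $X_{0}\perp Y_{0}$. By (A) and the clopen argument above, $f$ maps ${\mathcal G}_{k}(S)$ homeomorphically onto $[C,D]_{m}$, where $C=f(X_{0})\cap f(Y_{0})$, $D=f(X_{0})+f(Y_{0})$ and $\dim C=m-k$; writing $\tilde D=D\ominus C$, the rank-$k$ part $\hat f(W)=f(W)\ominus C$ gives a homeomorphism $\hat f:{\mathcal G}_{k}(S)\to{\mathcal G}_{k}(\tilde D)$ onto. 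Hence the compression $G(A)=P_{\tilde D}L(A)P_{\tilde D}$ sends ${\mathcal P}_{k}(S)$ onto ${\mathcal P}_{k}(\tilde D)$, so $G:{\mathcal F}_{s}(S)\to{\mathcal F}_{s}(\tilde D)$ is surjective; as both spaces have real dimension $4k^{2}$, $G$ is an isomorphism, and in particular $L$ is injective on ${\mathcal F}_{s}(S)$. Since $L(P_{X}+P_{Y}-P_{W})=P_{f(X)}+P_{f(Y)}-P_{f(W)}$, if $f(W)\in{\mathcal X}_{m}(f(X),f(Y))$ then the right-hand side is a rank-$m$ projection containing $C$ and sitting in $D$, hence equals $P_{f(W')}$ for some $W'\in{\mathcal G}_{k}(S)$; injectivity on ${\mathcal F}_{s}(S)$ then gives $P_{X}+P_{Y}=P_{W}+P_{W'}$, i.e. $W\in{\mathcal X}_{k}(X,Y)$. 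Thus $f$ restricts to a homeomorphism ${\mathcal X}_{k}(X,Y)\to{\mathcal X}_{m}(f(X),f(Y))$. Because being a $1$-dimensional real manifold is a homeomorphism invariant, Lemma~\ref{lemma0-4} now yields at once that $X,Y$ are non-compatible and adjacent iff $f(X),f(Y)$ are; as every non-compatible adjacent pair embeds in such an $S$, this holds globally.

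It remains to treat compatible adjacent pairs (and, symmetrically, the reverse implication), and this is where I expect the real work. Here ${\mathcal X}_{k}(X,Y)=[X\cap Y,X+Y]_{k}\cong{\mathcal G}_{1}(\mathbb{C}^{2})$ is a $2$-sphere, so ${\mathcal X}_{m}(f(X),f(Y))$ is a $2$-sphere as well. My plan is to choose inside $[X\cap Y,X+Y]_{k}$ a continuum $Z_{\lambda}$ of elements that are pairwise non-compatible adjacent and each non-compatible adjacent to both $X$ and $Y$; by the previous step their images, together with $f(X)$, and together with $f(Y)$, form infinite cliques of $\Gamma_{m}(H')$ inside the interval $[f(X)\cap f(Y),f(X)+f(Y)]_{m}$. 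Passing to the rank-$(m-a)$ parts in $(f(X)+f(Y))\ominus(f(X)\cap f(Y))$, the compressed $\widehat{f(X)},\widehat{f(Y)}$ are complementary, hence at the maximal distance $j'=m-a$, and the infinite common clique is forced into a single star or a single top; complementarity then gives $j'=1$, i.e. $f(X),f(Y)$ adjacent. The hard part will be excluding $j'\ge 2$ — a non-compatible, non-adjacent image pair whose ${\mathcal X}$-set is nonetheless a $2$-sphere. The genuinely delicate case is when the two cliques extend in mixed fashion, one into a top and the other into a star (which can share an infinite set), and closing it will require combining the complementarity of $\widehat{f(X)},\widehat{f(Y)}$ with Gehér's finer description of the sets ${\mathcal X}_{m}$ underlying Lemma~\ref{lemma0-4}, together with the star/top dichotomy of Lemma~\ref{lemma0-2}.
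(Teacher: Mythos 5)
Your proof of (A) and of the first two thirds of (B) is correct and is, in substance, the paper's own argument: the compression $G(A)=P_{\tilde D}L(A)P_{\tilde D}$ is just an explicit form of the paper's identification of $L({\mathcal F}_{s}(S))$ with ${\mathcal F}_{s}(\tilde H)$ for a $2k$-dimensional space $\tilde H$, and your derivation of the equality $f({\mathcal X}_{k}(X,Y))={\mathcal X}_{m}(f(X),f(Y))$ (valid, as your argument shows, for \emph{every} pair $X,Y\subset S$, adjacency playing no role), followed by Lemma \ref{lemma0-4} and homeomorphism invariance, is exactly how the paper obtains the equivalence for non-compatible adjacent pairs.

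The genuine gap is the final step, which you leave as a speculative program (continua of non-compatible adjacent elements, infinite cliques, the star/top dichotomy, the unresolved case $j'\ge 2$) and which you yourself acknowledge is not closed. The missing idea is much simpler and purely topological: inside ${\mathcal G}_{k}(S)$, the set of elements adjacent to a fixed $X$ coincides, up to the point $X$ itself, with the \emph{closure} of the set of elements that are non-compatible and adjacent to $X$. Indeed, a compatible adjacent pair is a limit of non-compatible adjacent ones (replace the line $Y\ominus(X\cap Y)$ by a nearby line that is neither equal nor orthogonal to $X\ominus(X\cap Y)$), while semicontinuity of $\dim(X\cap\cdot)$ shows that a limit of elements adjacent to $X$ is either adjacent to $X$ or equal to $X$. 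You have already shown that the restriction of $f$ to ${\mathcal G}_{k}(S)$ is a homeomorphism onto $[C,D]_{m}\cong{\mathcal G}_{k}(\tilde D)$ preserving the relation ``non-compatible and adjacent'' in both directions; applying the closure description on both sides (adjacency inside the interval $[C,D]_{m}$ is adjacency in ${\mathcal G}_{k}(\tilde D)$), and using injectivity of $f$ to exclude coincidence, transports adjacency in both directions at once --- compatible adjacent pairs and the converse implication included. This one observation replaces your entire clique analysis; in particular, no classification of the sets ${\mathcal X}_{m}$ beyond Lemma \ref{lemma0-4} is ever needed, so the troublesome non-compatible non-adjacent pairs with two-dimensional ${\mathcal X}$-sets never have to be confronted.
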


\begin{proof}
(A).  Suppose that $X,Y\in {\mathcal G}_{k}(H)$ are orthogonal. 
It was established above that \eqref{eq1-1}
contains a $(2k^{2})$-dimensional real manifold and (L3) shows that this is possible only 
in the case when the dimension of $f(X)\cap f(Y)$ is equal to $m-k$.
Then  \eqref{eq1-1} is homeomorphic to ${\mathcal G}_{k}(X+Y)$ and 
$f({\mathcal G}_{k}(X+Y))$ is an open-closed subsets of \eqref{eq1-1}.
Therefore,
$$f({\mathcal G}_{k}(X+Y))=[f(X)\cap f(Y), f(X)+f(Y)]_{m}$$
and onsequently
$${\mathcal X}_{m}(f(X),f(Y))=[f(X)\cap f(Y), f(X)+f(Y)]_{m}$$
which means that $f(X)$ and $f(Y)$ are compatible.

(B).
For any $X',Y'\in {\mathcal G}_{k}(H)$ there are orthogonal $X,Y\in {\mathcal G}_{k}(H)$ whose sum contains both $X',Y'$. 
The restriction of $f$ to ${\mathcal G}_{k}(X+Y)$ is a homeomorphism to  \eqref{eq1-1}.
This implies that $L({\mathcal F}_{s}(X+Y))$ is the subspace of ${\mathcal F}_{s}(H')$ spanned by all $P_{Z}$,
where $Z$ belongs to \eqref{eq1-1}.
This subspace can be identified with ${\mathcal F}_{s}(\tilde{H})$, where $\tilde{H}$ is a complex Hilbert space of dimension $2k$.
The restriction of $L$ to ${\mathcal F}_{s}(X+Y)$ is a linear isomorphism of ${\mathcal F}_{s}(X+Y)$ to ${\mathcal F}_{s}(\tilde{H})$
(this is a surjective linear operator between vector spaces are of the same finite dimension).
Then
$$f({\mathcal X}_{k}(X',Y'))={\mathcal X}_{m}(f(X'),f(Y')).$$
By Lemma \ref{lemma0-4}, $X',Y'$ are non-compatible and adjacent if and only if the same holds for $f(X'),f(Y')$.
The required statement follows from the fact that the set of all elements of ${\mathcal G}_{k}(X+Y)$ adjacent to $X'$ is the closure of the set of all  
$Y'\in {\mathcal G}_{k}(X+Y)$ such that $X',Y'$ are non-compatible and adjacent.
\end{proof}

\begin{proof}[Proof of Theorem \ref{theorem1}]
Let $k=m$. 
By Lemma \ref{lemma1-1}, $f$ is orthogonality preserving and it is also adjacency preserving in both directions
(in the case when $k=1$, the second statement is trivial).

Consider the case when $k=1$. 
Let $S\in {\mathcal G}_{2}(H)$ and let $X,Y$ be orthogonal elements of ${\mathcal G}_{1}(S)$.
Then ${\mathcal X}_{1}(X,Y)$ coincides with ${\mathcal G}_{1}(S)$ and 
$$f({\mathcal G}_{1}(S))=f({\mathcal X}_{1}(X,Y))={\mathcal X}_{1}(f(X),f(Y))={\mathcal G}_{1}(f(X)+f(Y)).$$
So, $f$ sends lines to lines. 
By (L2), $f$ is injective.
The image $f({\mathcal G}_{1}(H))$ is not contained in a line.
It follows from Theorem \ref{FTPG} that $f$ is induced by a semilinear injective transformation of $H$.
This semilinear transformation is orthogonality preserving (since $f$ is orthogonality preserving)
and Lemma \ref{lemma0-1} implies that it  is a linear or conjugate-linear isometry, i.e.
there is a linear or conjugate-linear isometry $U$ such that $f(X)=U(X)$ for every $X\in {\mathcal G}_{1}(H)$. 
Then $L(P)=L_{U}(P)$ for every $P\in {\mathcal P}_{1}(H)$ which implies that 
$L=L_{U}$ (since ${\mathcal F}_{s}(H)$ is spanned by ${\mathcal P}_{1}(H)$).

Similarly, we show that for $k\ge 2$ the statement follows from Theorem \ref{Pank-theorem} and Proposition \ref{Pank-prop}.
\end{proof}

Suppose that $H$ is infinite-dimensional  and prove Theorem \ref{theorem2} in several steps.
The case when $k=m$ was considered above and we assume that $m>k$.

\begin{proof}[Proof of Theorem \ref{theorem2} for $k=1$]
If $k=1$, then $m\ge 2$.
The second part of Lemma \ref{lemma1-1} implies that any two distinct elements of $f({\mathcal G}_{1}(H))$ are adjacent,
i.e. $f({\mathcal G}_{1}(H))$ is contained in a star or a top of ${\mathcal G}_{m}(H')$.
By the first part of Lemma \ref{lemma1-1},
for any orthogonal $X,Y\in {\mathcal G}_{1}(H)$ the images $f(X),f(Y)$ are ortho-adjacent.
If ${\mathcal X}\subset {\mathcal G}_{1}(H)$ is an infinite subset  formed by mutually orthogonal elements
(such subsets exist, since $H$ is infinite-dimensional), 
then $f({\mathcal X})$ is an infinite subset of ${\mathcal G}_{m}(H')$ consisting of mutually ortho-adjacent elements.
By Lemma \ref{lemma0-2}, this means that $f({\mathcal G}_{1}(H))$ is not contained in a top.
So, there is an $(m-1)$-dimensional subspace $W$ contained in each element of $f({\mathcal G}_{1}(H))$.

Let $H''$ be the orthogonal complement of $W$. 
Consider the map $$g:{\mathcal G}_{1}(H)\to {\mathcal G}_{1}(H'')$$ 
such that $g(X)=f(X)\cap H''$ for every $X\in {\mathcal G}_{1}(H)$.
This map is orthogonality preserving.
Since for any orthogonal $X,Y\in{\mathcal G}_{1}(H)$ we have 
$$f({\mathcal G}_{1}(X+Y))=[W, f(X)+f(Y)]_{m}$$
(see the proof of Lemma \ref{lemma1-1}), 
the map $g$ sends lines to lines. 
Also, $g$ is injective (since $f$ is injective) and it is clear that 
the image $g({\mathcal G}_{1}(H))$ is not contained in a line of ${\mathcal G}_{1}(H'')$.
Then $g$ is induced by a semilinear injection $U:H\to H''$ (Theorem \ref{FTPG}).
Lemma \ref{lemma0-1} guarantees that $U$ is a linear or conjugate-linear isometry.
Therefore, $$f(X)=U(X)+W$$ for all $X\in {\mathcal G}_{1}(H)$
and we have $L(P)=L_{U,W}(P)$ for all $P\in {\mathcal P}_{1}(H)$. 
This implies that $L=L_{U,W}$, since ${\mathcal F}_{s}(H)$ is spanned by ${\mathcal P}_{1}(H)$.
\end{proof}

From this moment, we suppose that $k\ge 2$.

\begin{lemma}\label{lemma1-2}
The map $f$ is ortho-adjacency preserving. 
\end{lemma}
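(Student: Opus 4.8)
The plan is to show that $f$ sends ortho-adjacent pairs to ortho-adjacent pairs. Since $f$ already preserves adjacency in both directions by Lemma \ref{lemma1-1}, it suffices to prove that compatibility is preserved on adjacent pairs: if $X,Y\in{\mathcal G}_{k}(H)$ are compatible and adjacent, then $f(X)$ and $f(Y)$ are compatible. The idea is to embed the pair into a geodesic whose endpoints are orthogonal and then exploit the rigidity of geodesics connecting compatible elements.

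First I would embed $X,Y$ into a geodesic of $\Gamma_{k}(H)$ connecting orthogonal elements. By the second part of Lemma \ref{lemma0-3}, since $X$ and $Y$ are compatible, there are orthogonal $A,B\in{\mathcal G}_{k}(H)$ and a geodesic $A=Z_{0},Z_{1},\dots,Z_{k}=B$ of $\Gamma_{k}(H)$ containing both $X$ and $Y$; its length is $d_{k}(A,B)=k-\dim(A\cap B)=k$, and consecutive vertices $Z_{i},Z_{i+1}$ are adjacent. Transporting this geodesic by $f$, the injectivity of $f$ and the adjacency preservation from Lemma \ref{lemma1-1} show that $f(Z_{0}),\dots,f(Z_{k})$ is a path of $k$ edges in $\Gamma_{m}(H')$ joining $f(A)$ and $f(B)$. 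By the first part of Lemma \ref{lemma1-1}, orthogonality of $A,B$ forces $f(A),f(B)$ to be compatible with $\dim(f(A)\cap f(B))=m-k$, whence $d_{m}(f(A),f(B))=m-(m-k)=k$. Thus the image path has length equal to the distance between its endpoints and is therefore a geodesic connecting the compatible elements $f(A),f(B)$.

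Finally, the first part of Lemma \ref{lemma0-3} applies to this image geodesic: every geodesic connecting compatible elements consists of mutually compatible elements, so $f(Z_{0}),\dots,f(Z_{k})$ are pairwise compatible. In particular $f(X)$ and $f(Y)$ are compatible, and together with adjacency this yields ortho-adjacency. The step I expect to be the main obstacle is the matching of dimensions: one must verify that the orthogonal endpoints are sent to a pair whose intersection has dimension \emph{exactly} $m-k$, which is supplied by Lemma \ref{lemma1-1}. This equality is precisely what makes $d_{m}(f(A),f(B))$ equal to the length $k$ of the transported path and thereby certifies it as a geodesic; without $\dim(f(A)\cap f(B))=m-k$ the image path could fail to be shortest and Lemma \ref{lemma0-3} would not apply.
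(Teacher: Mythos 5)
Your proof is correct and follows essentially the same route as the paper: embed the ortho-adjacent pair into a geodesic joining orthogonal elements (Lemma \ref{lemma0-3}), use Lemma \ref{lemma1-1} to see that the image path is a geodesic between compatible elements (via $\dim(f(A)\cap f(B))=m-k$, hence $d_{m}(f(A),f(B))=k$), and conclude by the rigidity statement of Lemma \ref{lemma0-3} that all elements of the image geodesic, in particular $f(X)$ and $f(Y)$, are mutually compatible. Your explicit verification of the distance computation is exactly the step the paper's proof also hinges on.
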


\begin{proof}
Let $X$ and $Y$ be ortho-adjacent elements of ${\mathcal G}_{k}(H)$.
Then $f(X),f(Y)$ are adjacent and we need to show that they are compatible. 
By the second part of Lemma \ref{lemma1-1}, $f$ sends every path in $\Gamma_{k}(H)$ to a path in $\Gamma_{m}(H')$.
Consider a geodesic in $\Gamma_{k}(H)$ containing $X,Y$ and joining some orthogonal $X',Y'\in {\mathcal G}_{k}(H)$
(see Lemma \ref{lemma0-3}). 
Since 
$$d_{k}(X',Y')=k=d_{m}(f(X'),f(Y'))$$
(by the first part of Lemma \ref{lemma1-1}), $f$ transfers this geodesic to a geodesic connecting $f(X')$ and $f(Y')$.
The subspaces $f(X')$ and $f(Y')$ are compatible (by the first part of Lemma \ref{lemma1-1}) and Lemma \ref{lemma0-3}
states that any geodesic connecting them is formed by mutually compatible elements. 
In particular, $f(X)$ and $f(Y)$ are compatible.
\end{proof}

Since $f$ is adjacency preserving in both directions, 
it transfers every maximal clique of $\Gamma_{k}(H)$
(a star or a top) to a subset in a maximal clique of $\Gamma_{m}(H')$;
moreover, distinct maximal cliques of $\Gamma_{k}(H)$ go to subsets of distinct maximal cliques of $\Gamma_{m}(H')$.
Lemmas \ref{lemma0-2} and \ref{lemma1-2} show that the image of a star cannot be contained in a top
(recall that $H$ is infinite-dimensional).
Therefore, $f$ transfers every star ${\mathcal S}\subset {\mathcal G}_{k}(H)$ to a subset of a star. 
The intersection of two distinct stars contains at most one element
which means that $f({\mathcal S})$ is contained in a unique star of ${\mathcal G}_{m}(H)$.
So, $f$ induces an injection $$f_{k-1}:{\mathcal G}_{k-1}(H)\to {\mathcal G}_{m-1}(H)$$
such that 
$$f([X\rangle_{k})\subset[f_{k-1}(X)\rangle_{m}$$
for every $X\in {\mathcal G}_{k-1}(H)$. Then 
$$f_{k-1}(\langle Y]_{k-1})\subset \langle f(Y)]_{m-1}$$
for every $Y\in {\mathcal G}_{k}(H)$.
The latter inclusion implies that $f_{k-1}$ is adjacency preserving
(since $f_{k-1}$ is injective and two distinct elements are adjacent if and only if there is a top containing them).

\begin{lemma}\label{lemma1-3}
If $X,Y\in {\mathcal G}_{k-1}(H)$ are orthogonal, then $f_{k-1}(X)$ and $f_{k-1}(Y)$ are compatible and 
$$\dim(f_{k-1}(X)\cap f_{k-1}(Y))=m-k.$$
Also, $f_{k-1}$ is ortho-adjacency preserving\footnote{This follows immediately from the previous statement if $k=2$.}.
\end{lemma}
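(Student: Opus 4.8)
The plan is to transfer the orthogonality information about $f$ (established in Lemma \ref{lemma1-1}) down to the induced map $f_{k-1}$ on the Grassmannian of $(k-1)$-dimensional subspaces. First I would fix orthogonal $X,Y\in {\mathcal G}_{k-1}(H)$. The natural idea is to enlarge $X$ and $Y$ to orthogonal $k$-dimensional subspaces and exploit the fact that $f_{k-1}(X)$ is determined as the unique common subspace of all the stars $f([X\rangle_k)$, together with the dimension formula $\dim(f(X')\cap f(Y'))=m-k$ for orthogonal $X',Y'\in {\mathcal G}_k(H)$ from Lemma \ref{lemma1-1}(A).

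The key steps, in order, are as follows. I would choose $k$-dimensional subspaces $\tilde X\supset X$ and $\tilde Y\supset Y$ that are orthogonal in $H$ (possible since $X\perp Y$ and $H$ is infinite-dimensional, so there is ample room to pick the extra directions orthogonally). By construction $\tilde X\in [X\rangle_k$ and $\tilde Y\in [Y\rangle_k$, so $f(\tilde X)\supset f_{k-1}(X)$ and $f(\tilde Y)\supset f_{k-1}(Y)$. By Lemma \ref{lemma1-1}(A), $f(\tilde X)$ and $f(\tilde Y)$ are compatible with $\dim(f(\tilde X)\cap f(\tilde Y))=m-k$. Since $f_{k-1}(X)\subset f(\tilde X)$ and $f_{k-1}(Y)\subset f(\tilde Y)$ are both $(m-1)$-dimensional inside $m$-dimensional spaces whose intersection has dimension $m-k$, I would compute $\dim(f_{k-1}(X)\cap f_{k-1}(Y))$ and show it equals $m-k$. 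Compatibility of $f_{k-1}(X)$ and $f_{k-1}(Y)$ should then follow from the compatibility of $f(\tilde X)$ and $f(\tilde Y)$, since compatibility is inherited by subspaces spanned by basis subsets of a common orthonormal basis — I would extract the right sub-basis.

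The main obstacle I anticipate is the dimension count: showing $\dim(f_{k-1}(X)\cap f_{k-1}(Y))=m-k$ rather than merely $\geq m-k$ or $\leq m-k$. The inequality $\dim(f_{k-1}(X)\cap f_{k-1}(Y))\ge \dim(f(\tilde X)\cap f(\tilde Y))-2$ is automatic from the codimension-one containments, but the exact value requires care. The cleanest route is probably to use freedom in the choice of $\tilde X,\tilde Y$: the subspace $f_{k-1}(X)$ is common to all enlargements $\tilde X\in[X\rangle_k$, whereas $f(\tilde X)$ varies, and by comparing several such enlargements I can pin the intersection down exactly. In particular, one enlargement can be chosen so that $f(\tilde X)\cap f(\tilde Y)$ is forced to lie inside $f_{k-1}(X)\cap f_{k-1}(Y)$, yielding the reverse inequality. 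Finally, for the ortho-adjacency-preserving assertion: $f_{k-1}$ is already adjacency preserving (established just before the lemma), and for ortho-adjacent $X,Y\in{\mathcal G}_{k-1}(H)$ I would run the same geodesic argument as in Lemma \ref{lemma1-2}, joining $X,Y$ by a geodesic in $\Gamma_{k-1}(H)$ to an orthogonal pair and invoking the compatibility of images of orthogonal pairs just proved, so that compatibility propagates along the geodesic.
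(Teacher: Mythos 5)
Your plan reproduces the paper's outer structure (orthogonal $k$-dimensional enlargements $\tilde X\supset X$, $\tilde Y\supset Y$; Lemma \ref{lemma1-1}(A) applied to their images; the upper bound $\dim(f_{k-1}(X)\cap f_{k-1}(Y))\le m-k$ from the containments $f_{k-1}(X)\subset f(\tilde X)$, $f_{k-1}(Y)\subset f(\tilde Y)$; the geodesic argument of Lemma \ref{lemma1-2} for ortho-adjacency), but it has a genuine gap at exactly the point you flag as the main obstacle: the lower bound $\dim(f_{k-1}(X)\cap f_{k-1}(Y))\ge m-k$. Your substitute --- comparing several enlargements until one satisfies $f(\tilde X)\cap f(\tilde Y)\subset f_{k-1}(X)\cap f_{k-1}(Y)$ --- is asserted, not proved, and the comparison available to you does not force it. What varying the enlargement actually gives is this: for distinct $\tilde X_1,\tilde X_2\in[X\rangle_{k}$ one has $f(\tilde X_1)\cap f(\tilde X_2)=f_{k-1}(X)$ (two distinct $m$-dimensional subspaces sharing the hyperplane $f_{k-1}(X)$), so if the subspaces $C_i=f(\tilde X_i)\cap f(\tilde Y)$ all coincided, they would lie in $f_{k-1}(X)$. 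But nothing in your proposal excludes the configuration where the $C_i$ are pairwise distinct, each meeting $f_{k-1}(X)$ only in a hyperplane of itself; in that case you are stuck with the automatic bound $\dim(f_{k-1}(X)\cap f_{k-1}(Y))\ge m-k-2$. Moreover, even if some $C_{\tilde X}$ were forced into $f_{k-1}(X)$, you would still need it inside $f_{k-1}(Y)$, which requires a second, simultaneous argument that the proposal does not supply.

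The paper closes this gap by a much simpler observation whose ingredient you already invoke elsewhere: $f_{k-1}$ is an adjacency preserving injection (established just before the lemma), hence it sends any path of $\Gamma_{k-1}(H)$ to a path of $\Gamma_{m-1}(H')$ and cannot increase the Grassmann-graph distance. Since $d_{k-1}(X,Y)=k-1$ for orthogonal $X,Y\in{\mathcal G}_{k-1}(H)$, this gives
$$m-1-\dim(f_{k-1}(X)\cap f_{k-1}(Y))=d_{m-1}(f_{k-1}(X),f_{k-1}(Y))\le k-1,$$
i.e. $\dim(f_{k-1}(X)\cap f_{k-1}(Y))\ge m-k$. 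Combined with your upper bound, this also yields the equality $f_{k-1}(X)\cap f_{k-1}(Y)=f(\tilde X)\cap f(\tilde Y)$, which is what the compatibility step really needs: your phrase ``compatibility is inherited by subspaces spanned by basis subsets of a common orthonormal basis'' does not apply as stated, because the hyperplanes $f_{k-1}(X)$, $f_{k-1}(Y)$ are not a priori spanned by subsets of such a basis. One must instead use that both hyperplanes contain the common intersection $C=f(\tilde X)\cap f(\tilde Y)$ and decompose $f_{k-1}(X)=\bigl(f_{k-1}(X)\cap X''\bigr)+C$ and $f_{k-1}(Y)=\bigl(f_{k-1}(Y)\cap Y''\bigr)+C$, where $f(\tilde X)=X''+C$ and $f(\tilde Y)=Y''+C$ are the orthogonal decompositions witnessing compatibility of the images. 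With these two repairs your argument becomes the paper's proof.
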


\begin{proof}
Since $f_{k-1}$ is an adjacency preserving injection, it transfers every path of $\Gamma_{k-1}(H)$ to a path of $\Gamma_{m-1}(H')$
and we have
$$d_{m-1}(f_{k-1}(X),f_{k-1}(Y))\le d_{k-1}(X,Y)=k-1.$$
Therefore,
$$d_{m-1}(f_{k-1}(X),f_{k-1}(Y))=m-1-\dim(f_{k-1}(X)\cap f_{k-1}(Y))\le k-1$$
which implies that
$$\dim (f_{k-1}(X)\cap f_{k-1}(Y))\ge m-k.$$
Let us take orthogonal $X',Y'\in {\mathcal G}_{k}(H)$ such that $X\subset X'$ and $Y\subset Y'$.
Then $f_{k-1}(X)$ and $f_{k-1}(Y)$ are contained in $f(X')$ and $f(Y')$, respectively.
The intersection of $f_{k-1}(X)$ and $f_{k-1}(Y)$  is contained in the intersection of $f(X')$ and $f(Y')$
which means that
$$\dim(f_{k-1}(X)\cap f_{k-1}(Y))\le \dim(f(X')\cap f(Y'))=m-k.$$
So, the dimension of the intersection of $f_{k-1}(X)$ and $f_{k-1}(Y)$ is equal to $m-k$ and 
\begin{equation}\label{eq2}
f_{k-1}(X)\cap f_{k-1}(Y)=f(X')\cap f(Y').
\end{equation}
The subspaces $f(X')$ and $f(Y')$ are compatible, i.e. 
there exist orthogonal $k$-dimen\-sional  subspaces $X''\subset f(X')$ and $Y''\subset f(Y')$
such that $f(X')\cap f(Y')$ is orthogonal to both $X'',Y''$ and
$$f(X')=X''+f(X')\cap f(Y'),\;\;\;f(Y')=Y''+f(X')\cap f(Y').$$
Since $f_{k-1}(X)$ and $f_{k-1}(Y)$ are hyperplanes in $f(X')$ and $f(Y')$ (respectively),
$$X''\cap f_{k-1}(X)\;\mbox{ and }\;Y''\cap f_{k-1}(Y)$$
are orthogonal $(k-1)$-dimensional subspaces which are also orthogonal to \eqref{eq2}. We have
$$f_{k-1}(X)=X''\cap f_{k-1}(X)+f_{k-1}(X)\cap f_{k-1}(Y),$$
$$f_{k-1}(Y)=Y''\cap f_{k-1}(Y)+f_{k-1}(X)\cap f_{k-1}(Y)$$
which means that $f_{k-1}(X)$ and $f_{k-1}(Y)$ are compatible.

As in the proof of Lemma \ref{lemma1-2}, we show that $f_{k-1}$ is ortho-adjacency preserving.
\end{proof}

Consider the case when $k\ge 3$.
Using the second part of Lemma \ref{lemma1-3}, 
we establish that for every star ${\mathcal S}\subset {\mathcal G}_{k-1}(H)$ there is a unique star of ${\mathcal G}_{m-1}(H')$
containing $f_{k-1}({\mathcal S})$.
This means that $f_{k-1}$ induces a map 
$$f_{k-2}:{\mathcal G}_{k-2}(H)\to {\mathcal G}_{m-2}(H')$$
which transfers tops to subsets of tops.
Therefore, for any adjacent $X,Y\in {\mathcal G}_{k-2}(H)$ the images $f_{k-2}(X),f_{k-2}(Y)$ are adjacent or coincident
(we cannot state that $f_{k-2}$ is injective). 
Then $f_{k-2}$ sends 
any path of $\Gamma_{k-2}(H)$ to a path of $\Gamma_{m-2}(H)$.
We repeat the above arguments and get the direct analogue of Lemma \ref{lemma1-3} for $f_{k-2}$.

Recursively, we construct a sequence of maps
$$f_{i}:{\mathcal G}_{i}(H)\to {\mathcal G}_{m-k+i}(H'),$$
where $f_{k}=f$ and 
$$f_{i}([X\rangle_{i})\subset[f_{i-1}(X)\rangle_{m-k+i}$$
for every $X\in {\mathcal G}_{i-1}(H)$ if $i\ge 2$.
Then 
$$f_{i}(\langle Y]_{i})\subset \langle f_{i+1}(Y)]_{m-k+i}$$
for every $Y\in {\mathcal G}_{i+1}(H)$ if $i\le k-1$.
The direct analogue of Lemma \ref{lemma1-3} holds for each $f_{i}$,
but we will need only the fact that 
$f_{1}$ sends orthogonal elements of ${\mathcal G}_{1}(H)$ to  ortho-adjacent elements of ${\mathcal G}_{m-k+1}(H')$.

For any distinct $X,Y\in {\mathcal G}_{1}(H)$ the images $f_{1}(X),f_{1}(Y)$ are adjacent or coincident 
(since $f_{1}$ is induced by $f_{2}$) and $f_{1}({\mathcal G}_{1}(H))$ is contained in a maximal clique of $\Gamma_{m-k+1}(H')$.
Using the fact that $f_{1}$ transfers orthogonal elements of ${\mathcal G}_{1}(H)$ to  ortho-adjacent elements of ${\mathcal G}_{m-k+1}(H')$,
we establish the existence of an $(m-k)$-dimensional subspace $W$ contained in each element of $f_{1}({\mathcal G}_{1}(H))$
(see the proof of Theorem \ref{theorem2} for $k=1$).

We will use the following inclusion
\begin{equation}\label{eq3}
f_{1}(\langle X]_{1})\subset \langle f(X)]_{m-k+1}\;\mbox{ for all }\;X\in{\mathcal G}_{k}(H)
\end{equation}
which is a simple consequence of the fact that $f_{i}$ is induced by $f_{i+1}$ for each $i\le k-1$.

\begin{lemma}\label{lemma1-4}
The map $f_{1}$ is injective.
\end{lemma}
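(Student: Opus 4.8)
The plan is to recast the injectivity of $f_{1}$ as the injectivity of an auxiliary orthogonality-preserving map and then to contradict the injectivity of $f=f_{k}$ provided by (L2). Let $W\in\mathcal{G}_{m-k}(H')$ be the subspace contained in every element of $f_{1}(\mathcal{G}_{1}(H))$. Each $f_{1}(\ell)$ splits as $f_{1}(\ell)=W\oplus g(\ell)$, where $g(\ell):=f_{1}(\ell)\cap W^{\perp}$ is $1$-dimensional, so $f_{1}$ is injective if and only if $g$ is. The map $g$ is orthogonality preserving: if $\ell\perp\ell'$, then $f_{1}(\ell),f_{1}(\ell')$ are ortho-adjacent, their intersection is exactly $W$, and the compatibility splitting of $f_{1}(\ell),f_{1}(\ell')$ forces $g(\ell)\perp g(\ell')$. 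Moreover, for any $Z\in\mathcal{G}_{k}(H)$ and any line $\ell\in\langle Z]_{1}$, the inclusion \eqref{eq3} together with $W\subset f_{1}(\ell)$ yields $W\subset f(Z)$ and $g(\ell)\subset f(Z)\cap W^{\perp}$.

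The key step is a reconstruction formula expressing $f$ through $g$. Fix $Z\in\mathcal{G}_{k}(H)$ and mutually orthogonal lines $\ell_{1},\dots,\ell_{k}$ spanning $Z$. By the previous paragraph each $g(\ell_{i})$ lies in $f(Z)\cap W^{\perp}$, and since $g$ is orthogonality preserving the lines $g(\ell_{1}),\dots,g(\ell_{k})$ are mutually orthogonal. As $\dim\bigl(f(Z)\cap W^{\perp}\bigr)=m-\dim W=k$, these $k$ orthogonal lines span $f(Z)\cap W^{\perp}$, and therefore $f(Z)=W\oplus g(\ell_{1})\oplus\cdots\oplus g(\ell_{k})$. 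Thus $f(Z)$ is completely determined by $W$ and by the values of $g$ on an orthonormal frame of $Z$.

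I would then prove injectivity of $g$ by contradiction. Suppose $g(\ell)=g(\ell')$ for distinct lines $\ell\neq\ell'$. Since $H$ is infinite-dimensional, $(\ell+\ell')^{\perp}$ is infinite-dimensional, so I may choose mutually orthonormal lines $m_{1},\dots,m_{k-1}$ inside it, each orthogonal to $\ell$ and to $\ell'$. Put $Z=\ell\oplus m_{1}\oplus\cdots\oplus m_{k-1}$ and $Z'=\ell'\oplus m_{1}\oplus\cdots\oplus m_{k-1}$; both are $k$-dimensional and distinct, since $\ell\subset Z'$ together with $\ell\perp m_{i}$ for all $i$ would force $\ell\subset\ell'$, i.e. $\ell=\ell'$. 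Applying the reconstruction formula to these two frames gives $f(Z)=W\oplus g(\ell)\oplus\bigoplus_{i}g(m_{i})$ and $f(Z')=W\oplus g(\ell')\oplus\bigoplus_{i}g(m_{i})$, so $g(\ell)=g(\ell')$ forces $f(Z)=f(Z')$, contradicting (L2). Hence $g$, and consequently $f_{1}$, is injective.

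The main obstacle is the reconstruction formula of the second paragraph: one must verify that the mutually orthogonal images $g(\ell_{i})$ actually exhaust the $k$-dimensional space $f(Z)\cap W^{\perp}$ rather than merely sitting inside it, which is exactly where the dimension count and orthogonality-preservation combine. Once $f(Z)$ is recovered from $g$ on orthonormal frames, the injectivity of $f$ transfers to $g$ at once; infinite-dimensionality of $H$ enters only to provide enough room to complete $\ell$ and $\ell'$ to two $k$-frames sharing a common orthogonal part.
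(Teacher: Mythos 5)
Your proof is correct and follows essentially the same route as the paper: complete the two distinct lines $\ell,\ell'$ to $k$-dimensional subspaces sharing a common orthogonal frame, use \eqref{eq3} together with ortho-adjacency of the images and the common subspace $W$ to reconstruct $f(Z)$ and $f(Z')$ from the $f_{1}$-images of the frame lines, and contradict the injectivity of $f$ from (L2). Your map $g(\ell)=f_{1}(\ell)\cap W^{\perp}$ is only a repackaging (the paper introduces the same $g$ immediately after this lemma), so the two arguments differ in bookkeeping, not in substance.
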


\begin{proof}
For any distinct $X,Y\in {\mathcal G}_{1}(H)$ we take orthogonal $X_{1},\dots,X_{k-1}\in {\mathcal G}_{1}(H)$
which are orthogonal to both $X$ and $Y$. 
Then 
$$X'=X_{1}+\dots+X_{k-1}+X\;\mbox{ and }\;Y'=X_{1}+\dots+X_{k-1}+Y$$
are distinct $k$-dimensional subspaces. 
The $(m-k+1)$-dimensional subspaces 
$$f_{1}(X_{1}),\dots,f_{1}(X_{k-1}),f_{1}(X)$$
are mutually ortho-adjacent and each of them contains the $(m-k)$-dimensional subspace $W$.
This means that their sum is $m$-dimensional. 
Similarly, we establish that the subspace 
$$f_{1}(X_{1})+\dots+f_{1}(X_{k-1})+f_{1}(Y)$$
is $m$-dimensional.
The inclusion \eqref{eq3} shows that 
$$f(X')=f_{1}(X_{1})+\dots+f_{1}(X_{k-1})+f_{1}(X),$$
$$f(Y')=f_{1}(X_{1})+\dots+f_{1}(X_{k-1})+f_{1}(Y).$$
If $f_{1}(X)=f_{1}(Y)$, then we have $f(X')=f(Y')$ which contradicts the fact that $f$ is injective.
Therefore, $f_{1}(X)\ne f_{1}(Y)$. 
\end{proof}

As the proof of Theorem \ref{theorem2} for $k=1$,
we denote by $H''$ the orthogonal complement of $W$ and consider the map $g:{\mathcal G}_{1}(H)\to {\mathcal G}_{1}(H'')$
such that $g(X)=f_{1}(X)\cap H''$ for every $X\in {\mathcal G}_{1}(H)$.
It is clear that $g$  is orthogonality preserving and the image $g({\mathcal G}_{1}(H))$ is not contained in a line of ${\mathcal G}_{1}(H'')$.
By Lemma \ref{lemma1-4}, this map is injective.
Also, it sends lines to subsets of lines (since $f_{1}$ is induced by $f_{2}$).
So, $g$ satisfies the conditions of Theorem \ref{FTPG}.
Then there is a linear or conjugate-linear isometry $U:H\to H''$ such that 
$$f_{1}(X)=U(X)+W$$
for all $X\in {\mathcal G}_{1}(H)$
(see the proof of Theorem \ref{theorem2} for $k=1$).
Using \eqref{eq3}, we establish that $f(X)$ coincides with $U(X)+W$ for each $X\in {\mathcal G}_{k}(H)$,
i.e. $L(P)=L_{U,W}(P)$ for all $P\in {\mathcal P}_{k}(H)$. 
Since ${\mathcal F}_{s}(H)$ is spanned by ${\mathcal P}_{k}(H)$,
we have $L=L_{U,W}$.

\end{document}